\newcommand{\fonction}[5]{\begin{array}{l rcl}
#1: & #2 & \longrightarrow & #3\\
      & #4 & \longmapsto & #5 \end{array}}
\newcommand{\N}{\mathbb{N}}
\newcommand{\ep}{\varepsilon}
\newcommand\restr[2]{{
		\left.\kern-\nulldelimiterspace 
		#1 
		\littletaller 
		\right|_{#2} 
}}
\newcommand{\littletaller}{\mathchoice{\vphantom{\big|}}{}{}{}}
\newcommand{\Lip}{{\mathrm{Lip}}}
\newcommand{\F}{\mathcal{F}}
\def\<{\langle}
\def\>{\rangle}
\renewcommand{\hat}{\widehat}
\def\restriction#1#2{\mathchoice
              {\setbox1\hbox{${\displaystyle #1}_{\scriptstyle #2}$}
              \restrictionaux{#1}{#2}}
              {\setbox1\hbox{${\textstyle #1}_{\scriptstyle #2}$}
              \restrictionaux{#1}{#2}}
              {\setbox1\hbox{${\scriptstyle #1}_{\scriptscriptstyle #2}$}
              \restrictionaux{#1}{#2}}
              {\setbox1\hbox{${\scriptscriptstyle #1}_{\scriptscriptstyle #2}$}
              \restrictionaux{#1}{#2}}}
\def\restrictionaux#1#2{{#1\,\smash{\vrule height .8\ht1 depth .85\dp1}}_{\,#2}}
\theoremstyle{plain}
\newtheorem{thm}{Theorem}[section]
\newtheorem{cor}[thm]{Corollary}
\newtheorem{lem}[thm]{Lemma}
\newtheorem{lemma}[thm]{Lemma}
\newtheorem{prop}[thm]{Proposition}
\theoremstyle{definition}
\newtheorem{defn}[thm]{Definition}
\newtheorem{remark}[thm]{Remark}
\newtheorem{question}{Question}
\author[M. Lemay]{Mathis Lemay}
\address[M. Lemay]{Univ Gustave Eiffel, Univ Paris Est Creteil, CNRS, LAMA UMR8050, F-77447 Marne-la-Vallée, France}
\email{mathis.lemay@univ-eiffel.fr}
\begin{document}
	\title[Equivalences for weithted Lipschitz operators]{Equivalences between certain properties of weighted Lipschitz operators}
	
	\subjclass[2020]{Primary 47B01, 47B07, 47B33; Secondary 46B20, 54E35}

	\keywords{Lipschitz-free space, weighted Lipschitz operator, Strictly singular, Strictly cosingular, compactness, weak compactness}

	\begin{abstract}
       We show that for a weighted Lipschitz operator $\omega\widehat{f}$, certain linear properties are equivalent. Specifically, we prove that compactness, strict singularity, and strict cosingularity are all equivalent to the property of not fixing any complemented copy of $\ell^1$. Then we generalize this result to operators between Lipschitz-free spaces that preserve finitely supported elements, a larger class of operators.
	\end{abstract}

	\maketitle

\section{Introduction}\label{intro}

Let $(M,d)$ be a pointed metric space, with basepoint $0_M \in M$. Consider the vector space
$$\mathrm{Lip}_0(M)=\{f: M \longrightarrow \mathbb{C} ~  | ~  f(0_M)=0, \;  f \text{ is Lipschitz}\},$$ which is a Banach space endowed with the classical norm on Lipschitz function spaces:
$$\|f\|_L=\displaystyle\sup_{x \neq y} \frac{|f(x)-f(y)|}{d(x,y)}.$$  
The mapping

$$\begin{array}{lcccccc}
\delta_M : & M & \longrightarrow & \Lip_0(M)^* & & &  \\
           & x & \longmapsto     & \delta_M(x) : &\Lip_0(M) & \longrightarrow & \mathbb C\\
           &   &                 &               & f       &\longmapsto& f(x)
\end{array}
$$
is an isometry. Moreover, $\{\delta_M(x) ~  | ~ x \in M\}$ is linearly independent, and the Lipschitz-free space is defined as $\mathcal{F}(M)=\overline{\mathrm{span}}\{\delta_M(x), x \in M\}$, where the closure is taken with respect to the norm topology of $\mathrm{Lip}_0(M)^*$. The fundamental universal extension property of Lipschitz-free spaces (see \cite{Godefroy_Kalton}) states that, given a Lipschitz map $f: M \longrightarrow X$ from a pointed metric space $M$ to a Banach space $X$, such that $f(0_M)=0_X$, there exists a unique bounded linear map $\overline{f}: \mathcal{F}(M) \longrightarrow X$ such that $\overline{f}(\delta_M(x))=f(x)$. Moreover, $\|\overline{f}\|=\|f\|_L$, the best Lipschitz constant of $f$. In particular, this result implies that $\mathcal{F}(M)$ serves as the natural predual of $\mathrm{Lip}_0(M)$. Furthermore, given a Lipschitz map between two pointed metric spaces $f: M \longrightarrow N$ which fixes the origin, there exists a unique bounded linear operator $\widehat{f}: \mathcal{F}(M) \longrightarrow \mathcal{F}(N)$ such that $\delta_N \circ f=\widehat{f} \circ \delta_M$ and $\|\widehat{f}\| = \|f\|_L$.
A similar operator can be constructed by introducing a weight $\omega: M \longrightarrow \mathbb{R}$ as follows:
$$\omega\widehat{f} : \delta(x) \in \F(M) \longmapsto w(x) \delta(f(x)) \in \F(N).$$

However, the fundamental theorem mentioned above does not guarantee the continuity of the linear operator. Nevertheless, in \cite{ACP24}, the authors have established precise conditions ensuring that $\omega\widehat{f}$ is bounded, as well as criteria for its compactness. The present work establishes a connection between compactness, strict singularity and strict cosingularity. The latter notion has been introduced by Pełczyński in 1965 (see \cite{Pel}). Notably, Pe\l{}czy\'nski proved in \cite{Pel} the following result:

\begin{thm}{(Pe\l{}czy\'nski, \cite{Pel})}
Let $X$ a Banach space and $(\Omega,\mathcal{T},\mu)$ a measured space. Let $T: X \longrightarrow L^1(\mu)$ an operator. The following properties are equivalent: 
\begin{enumerate}
    \item[-] $T$ is strictly cosingular;
    \item[-] $T$ is weakly compact;
    \item[-] $T$ is strictly singular;
    \item[-] $T$ does not fix any complemented copy of $\ell^1$.
\end{enumerate}
\end{thm}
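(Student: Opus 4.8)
The plan is to route the four properties through two classical engines: the Kadec--Pe\l{}czy\'nski dichotomy inside $L^1(\mu)$, and Pe\l{}czy\'nski's earlier theorem on operators \emph{out of} a $C(K)$-space (weak compactness $=$ strict singularity there), applied to $T^*\colon L^\infty(\mu)\cong C(K)\to X^*$. First I would record the extraction lemma that does most of the work: if $(f_n)$ is a bounded sequence in $L^1(\mu)$ that is not uniformly integrable, then after passing to a subsequence one may split $f_{n_k}=u_k+v_k$ with the $u_k$ pairwise disjointly supported, $\inf_k\|u_k\|_1>0$ and $\|v_k\|_1\to 0$; hence $(u_k)$ is isometrically equivalent to the unit vector basis of $\ell^1$, its closed linear span is norm-one complemented in $L^1(\mu)$ via the natural band projection, and the principle of small perturbations transfers both facts to $(f_{n_k})$ itself. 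Coupled with the Dunford--Pettis characterization (a bounded subset of $L^1(\mu)$ is relatively weakly compact iff it is uniformly integrable), this says that a non-weakly-compact operator $T$ into $L^1(\mu)$, applied to a suitable sequence $(x_n)$ in the unit ball of $X$, produces a subsequence $(Tx_{n_k})$ equivalent to the $\ell^1$-basis and spanning a complemented subspace of $L^1(\mu)$.

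With the lemma in hand I would close a cycle among weak compactness, strict cosingularity, and not fixing a complemented copy of $\ell^1$. That weak compactness implies $T$ does not fix a complemented copy of $\ell^1$ is immediate, since $\ell^1$ is not reflexive. For the reverse, contrapositively: if $T$ is not weakly compact then $T(B_X)$ is not uniformly integrable, so the extraction lemma gives $(Tx_{n_k})$ equivalent to the $\ell^1$-basis with $[Tx_{n_k}]$ complemented in $L^1(\mu)$; as $T$ is bounded, $(x_{n_k})$ is also $\ell^1$-like and $T$ restricts to an isomorphism of $[x_{n_k}]$ onto $[Tx_{n_k}]$ (the weak sequential completeness of $L^1(\mu)$ together with Rosenthal's $\ell^1$-theorem gives an alternative derivation of the same conclusion). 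For weak compactness $\Leftrightarrow$ strict cosingularity: if $T$ fixes a complemented copy of $\ell^1$ with associated band projection $P\colon L^1(\mu)\to T(Z)$, then $P$ is a quotient map onto an infinite-dimensional space and $PT$ is surjective, so $T$ is not strictly cosingular; conversely, if a quotient map $\pi\colon L^1(\mu)\to W$ onto an infinite-dimensional $W$ has $\pi T$ surjective, then $T^*\pi^*\colon W^*\to X^*$ is an isomorphic embedding while $\pi^*\colon W^*\hookrightarrow L^\infty(\mu)$ is an embedding, so $T^*$ is not strictly singular as an operator from $L^\infty(\mu)\cong C(K)$; Pe\l{}czy\'nski's $C(K)$-theorem then forces $T^*$ to be non-weakly-compact, hence $T$ to be non-weakly-compact by Gantmacher's theorem.

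It remains to insert strict singularity. The implication ``strictly singular $\Rightarrow$ does not fix any copy of $\ell^1$'' (a fortiori not a complemented one) is trivial, so through the cycle above strict singularity already implies all the other properties. For the converse, suppose $T$ is not strictly singular, so $T$ is an isomorphism on some infinite-dimensional $Y\subseteq X$, and apply the Kadec--Pe\l{}czy\'nski dichotomy to the subspace $T(Y)\subseteq L^1(\mu)$: either $T(Y)$ contains a copy of $\ell^1$ complemented in $L^1(\mu)$, which pulls back through the isomorphism $T|_Y$ to exhibit $T$ fixing a complemented copy of $\ell^1$, or the unit ball of $T(Y)$ is uniformly integrable and $T(Y)$ is an infinite-dimensional reflexive subspace of $L^1(\mu)$. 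Excluding this second alternative is exactly the point I expect to be the crux: it is automatic when $\mu$ is purely atomic (then $L^1(\mu)=\ell^1(\Gamma)$ has the Schur property, hence no infinite-dimensional reflexive subspace) and otherwise is where the structural input specific to $L^1(\mu)$ must enter. The remaining work is routine: verifying that the perturbation and complementation constants in the extraction lemma are uniform, checking the identities $(\pi T)^*=T^*\pi^*$ and the use of the closed-range theorem, and assembling the implications into the stated four-fold equivalence.
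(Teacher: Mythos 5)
The paper itself offers no proof of this statement: it is quoted from Pe\l{}czy\'nski's 1965 paper purely as motivation, so there is no in-paper argument to compare yours against. Judged on its own merits, the part of your proposal that closes the cycle among weak compactness, strict cosingularity and not fixing a complemented copy of $\ell^1$ follows the standard and essentially correct route: the Dunford--Pettis/uniform-integrability criterion plus subsequence splitting produces, from a non-weakly-compact $T$, a sequence $(Tx_{n_k})$ equivalent to the $\ell^1$-basis whose span is complemented, and the dualization through $L^\infty(\mu)\cong C(K)$ together with Pe\l{}czy\'nski's $C(K)$-theorem handles strict cosingularity. One inaccuracy there: in the splitting $f_{n_k}=u_k+v_k$ you cannot arrange $\|v_k\|_1\to 0$ in general (Rosenthal's subsequence splitting only makes $(v_k)$ uniformly integrable), so the principle of small perturbations does not apply directly and you must instead run the biorthogonal-functional argument on the disjoint parts; the conclusion you want is nevertheless classical and true.

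The genuine gap is exactly the step you flag as the crux, and it cannot be filled: for non-atomic $\mu$ the implication ``weakly compact $\Rightarrow$ strictly singular'' is false, so the second alternative of the Kadec--Pe\l{}czy\'nski dichotomy really does occur. Concretely, the formal inclusion $\iota\colon L^2[0,1]\to L^1[0,1]$ is weakly compact (it factors through a reflexive space), fixes no copy of $\ell^1$ at all, and is strictly cosingular (if $Q\circ\iota$ were onto an infinite-dimensional quotient $L^1/Z$, then $Z^\perp$ would be an infinite-dimensional subspace of $L^\infty$ on which the $L^\infty$- and $L^2$-norms are equivalent, contradicting Grothendieck's theorem on bounded subspaces of $L^p$); yet by Khintchine's inequality $\iota$ is an isomorphism on the closed span of the Rademacher functions, hence not strictly singular. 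So three of the four listed properties hold for $\iota$ while the third fails, and the defect lies in the statement as transcribed rather than only in your argument: Pe\l{}czy\'nski's theorem for operators \emph{into} $L^1(\mu)$ equates weak compactness, strict cosingularity and not fixing a complemented copy of $\ell^1$, while strict singularity belongs to the dual statement for operators defined on $C(S)$ (or to the purely atomic case, where $L^1(\mu)$ has the Schur property --- the same mechanism, via the Schur property of $\mathcal{FS}_2(N)$, that makes the full list of equivalences valid for the weighted Lipschitz operators of Theorem~\ref{thm:main}). Your proposal is sound up to the point where you stop, and you correctly identified where, and why, it breaks.
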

The connection between the preceding theorem and our framework of \emph{weighted Lipschitz operators} may not be immediately obvious, so we briefly clarify this link. It is well known that Lipschitz-free spaces exhibit a strong structural relationship with $L^1(\mu)$ spaces. In particular, it was shown in \cite{Cuth_Johanis} that $\mathcal{F}(M)$ contains a complemented copy of $\ell^1$ whenever $M$ is infinite. Moreover, by a result of Godard \cite{Godard}, $L^1([0,1])$ embeds isometrically into $\mathcal{F}(M)$ whenever $M$ contains a bi-Lipschitz copy of a subset $S \subset \mathbb{R}$ with positive Lebesgue measure. 
Furthermore, the paper \cite{p1u} characterizes those Lipschitz-free spaces that possess the Schur property, a feature reflecting a pronounced $\ell^1$-type behavior within the Banach space. 
\smallskip

Therefore, inspired by Pe\l{}czy\'nski's result above, we will prove the following main result of this note. 

\begin{thm}\label{thm:main}
Let $M,N$ be infinite metric spaces, $f: M \longrightarrow N$ and $\omega: M \longrightarrow \mathbb{C}$ such that 
$\omega \widehat{f} : \delta(x) \in \F(M) \mapsto \omega(x)\delta(f(x)) \in \F(N)$
is a bounded operator.\\ The following properties are equivalent: 
\begin{enumerate}
    \item[(i)] $\omega\widehat{f}$ is compact;
    \item[(ii)] $\omega\widehat{f}$ is weakly compact;
    \item[(iii)] $\omega\widehat{f}$ is super strictly singular;
    \item[(iv)] $\omega\widehat{f}$ is super strictly cosingular;
    \item[(v)] $\omega\widehat{f}$ is strictly singular;
    \item[(vi)] $\omega\widehat{f}$ is strictly cosingular;
    \item[(vii)] $\omega\widehat{f}$ does not fix any copy of $\ell^1$;
    \item[(viii)] $\omega\widehat{f}$ does not fix any complemented copy of $\ell^1$.
\end{enumerate}
\end{thm}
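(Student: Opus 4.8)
The strategy is to establish a cycle of implications rather than proving each pair of equivalences directly. The easy implications are immediate from the general theory: (i)$\Rightarrow$(ii) always, (ii)$\Rightarrow$(v) and (ii)$\Rightarrow$(vi) because weakly compact operators are both strictly singular and strictly cosingular, (iii)$\Rightarrow$(v), (iv)$\Rightarrow$(vi), (v)$\Rightarrow$(vii), and (v)$\Rightarrow$(viii), (vi)$\Rightarrow$(viii) are all standard. Similarly (i)$\Rightarrow$(iii) and (i)$\Rightarrow$(iv) since compact operators are super strictly singular and super strictly cosingular. So the whole theorem reduces to closing the loop: it suffices to prove that if $\omega\widehat f$ is \emph{not} compact, then it \emph{does} fix a complemented copy of $\ell^1$, i.e. (vii)$\Rightarrow$(i) and (viii)$\Rightarrow$(i) both follow from the contrapositive statement ``$\omega\widehat f$ not compact $\Rightarrow$ $\omega\widehat f$ fixes a complemented copy of $\ell^1$'', which also gives the reverse of the trivial chain.

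First I would recall from \cite{ACP24} the explicit characterization of when $\omega\widehat f$ is bounded and when it is compact; the compactness criterion there should be phrased in terms of the behaviour of the quantities $\frac{\omega(x)}{d(x,y)}\,d(f(x),f(y))$ and $\omega(x)/\omega(y)$ over pairs of points, or equivalently in terms of a ``uniform smallness'' condition on a net of normalized molecules $m_{xy} = \frac{\delta(x)-\delta(y)}{d(x,y)}$. The key structural fact is that $\mathcal F(M)$ is spanned by molecules, and that $\omega\widehat f$ sends the molecule $m_{xy}$ to $\frac{\omega(x)\delta(f(x)) - \omega(y)\delta(f(y))}{d(x,y)}$. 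If $\omega\widehat f$ is not compact, the failure of the \cite{ACP24} criterion produces a sequence of pairs $(x_n,y_n)$ for which the images $\omega\widehat f(m_{x_n y_n})$ stay bounded below in norm while, after passing to a subsequence, the relevant points ``separate'' — the supports $\{f(x_n), f(y_n)\}$ can be taken to be $\varepsilon$-separated in a controlled metric sense, or the weights are bounded away from $0$ on them.

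The heart of the argument is then a gliding-hump / separation construction: from such a separated sequence of molecules one extracts a subsequence $(u_n)$ in $\mathcal F(M)$ equivalent to the $\ell^1$-basis on which $\omega\widehat f$ acts as an $\ell^1$-isomorphism onto its image, and whose image span is complemented in $\mathcal F(N)$. Complementation is where the special geometry of Lipschitz-free spaces is essential: one builds a bounded projection onto $\overline{\linsp}\{\omega\widehat f(u_n)\}$ by exhibiting a suitable sequence of functionals in $\Lip_0(N)$ (``bump functions'' that are $1$ near $f(x_n)$ and $0$ near $f(y_n)$ and near the other pairs, using the separation to keep Lipschitz constants uniformly bounded), assembled into a bounded operator $\mathcal F(N)\to\ell^1$ that inverts the isomorphism up to the projection. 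This is the standard mechanism by which $\mathcal F$ of an infinite metric space contains \emph{complemented} copies of $\ell^1$ (as in \cite{Cuth_Johanis}), adapted so that the copy lies in the range of $\omega\widehat f$ and is the image of an $\ell^1$-copy in the domain. Pulling back, the domain copy $\overline{\linsp}\{u_n\}$ is also complemented (via $P$ composed with $\omega\widehat f$ and the isomorphism inverse), so $\omega\widehat f$ fixes a complemented copy of $\ell^1$, establishing (vii)$\Rightarrow$(i) and hence all the equivalences.

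The main obstacle I anticipate is the simultaneous control needed for the separation argument: one must arrange a \emph{single} subsequence along which the molecules in the domain are $\ell^1$-like, their images are bounded below, the images are $\ell^1$-like, \emph{and} the bump functions witnessing complementation have uniformly bounded Lipschitz norm — all of this while only knowing the negation of an asymptotic condition from \cite{ACP24}. Handling the possibility that $f$ collapses points (so $f(x_n)=f(y_n)$, in which case non-compactness must come from the weights $\omega$ oscillating, and the relevant $\ell^1$-copy is generated by points rather than molecules) likely requires splitting into cases, and merging these cases into one clean statement is the delicate bookkeeping part of the proof.
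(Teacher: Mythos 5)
Your plan matches the paper's proof: the paper likewise reduces everything to the contrapositive of (viii)$\Rightarrow$(i), invokes the criterion of \cite{ACP24} that non-compactness of $\omega\widehat{f}$ is equivalent to $\omega\widehat{f}(\mathcal{M})$ not being relatively compact, and then constructs disjointly supported, uniformly Lipschitz biorthogonal bump functions (with a four-way case split according to whether $(f(x_n))$ and $(f(y_n))$ separate or converge) to produce a complemented $\ell^1$-copy spanned by the image molecules, exactly as you describe. The only organizational difference is that the paper first obtains the $\ell^1$-equivalence of the image sequence from the Schur property of $\mathcal{FS}_2(N)$ combined with Rosenthal's $\ell^1$-theorem and only afterwards builds the projection, whereas you propose to extract the $\ell^1$-lower estimate from the bump functionals themselves; both routes work.
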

It is well known that the adjoint of $\omega \widehat{f}$ is the weigthed composition operator $\omega C_f : g \in \Lip_0(N) \longmapsto \omega g \circ f \in \Lip_0(M)$.
Thus, we deduce a corollary which states similar equivalences for the adjoint operator: the properties (i) to (vi) are still equivalent for $\omega C_f$. They are also equivalent to (vii)' and (viii)' which consist in replacing $\ell^1$ by $\ell^{\infty}$ in (vii) and (viii). 
\smallskip

In the third part of this article, we show that this theorem holds for a larger class of operators. More specifically, we consider the operators $T : \mathcal{F}(M) \longrightarrow \mathcal{F}(N)$ which preserve finitely supported elements in a certain way, such as Lipschitz operators or multiplication operators.

\section{Preliminaries}

\subsection{Notation} 

Throughout the paper, $M$ and $N$ denote metric spaces. 
\smallskip

For $x \in M$ and $r>0$, $B(x,r):=\{y \in M ~  | ~  d(y,x)<r\}$ denotes the open ball centered at $x$ of radius $r$, and $\overline{B}(x,r)$ denotes the closed ball centered at $x$ of radius~$r$. 
\smallskip

If $A \subset M$ and $x \in M$, then the distance from $x$ to $A$ is defined by: 
$$d(x,A):=\inf\{d(x,y) ~  | ~  y \in A\}.$$
Moreover, $\mathrm{rad}(M):=\sup\limits_{y \in M} d(y,0)$ denotes the radius of $M$.
\smallskip

If $(x_n) \subset M$ is a sequence in $M$ and $\varepsilon>0$, we say that $(x_n)$ is \emph{$\varepsilon$-separated} if: 
$$\forall n \neq m \in \mathbb{N}, d(x_n,x_m) \geq \varepsilon.$$

Throughout this paper, the letters $X$ and $Y$ are reserved for Banach spaces, and $S_X$ stands for the unit sphere of $X$. The notation $X \cong Y$ means that $X$ is isomorphic to $Y$.
 Unless otherwise stated, the term \emph{operator} refers to a bounded linear map.
\smallskip

We say that an operator $T: X \longrightarrow Y$ \emph{does not fix any (complemented) copy of $\ell^1$} if it is impossible to find subspaces $E \subset X$ and $F \subset Y$ such that $E \cong \ell^1, F \cong \ell^1$, ($E$ is complemented in $X$, $F$ is complemented in $Y$), and $\restriction{T}{E}: E \longrightarrow F$ is an isomorphism. Note that saying that $T$ fixes a complemented copy of $\ell^1$ is equivalent to say that there exist subspaces $E \subset X$ and $F \subset Y$, continuous projections $P_X : X \longrightarrow E$ and $P_Y : Y \longrightarrow F$ such that the following diagram:

$$\begin{tikzcd}
X \arrow[d,"P_X"'] \arrow[r,"T"] & Y \arrow[d,"P_Y"]\\
E \arrow[r,"\restriction{T}{E}"'] & F
\end{tikzcd}$$
commutes, and $\restriction{T}{E}$ is an isomorphism. Indeed, assuming that $T$ fixes a complemented copy of $\ell^1$, there exists a continuous projection $P_Y : Y \longrightarrow F$, and then define $P_X$ by $P_X=T \circ P_Y \circ (\restriction{T}{E})^{-1}$. One can easily check that $P_X : X \longrightarrow E$ is a projection and that it makes the diagram commutative.

\subsection{Lipschitz-free spaces} For $M,N$ metric spaces, $\mathcal{F}(M)$, $\mathcal{F}(N)$ denote their associated Lipschitz-free spaces. The notation $\mathrm{Lip}_0(M)$ is used with the same meaning as in the introduction.

Denote by
$$\mathcal{M}:=\left\{\frac{\delta(x)-\delta(y)}{d(x,y)} ~  | ~  x \neq y \in M\right\} \subset \F(M)$$ the set of \emph{molecules}.
\smallskip

For $k \in \mathbb{N}$, $\mathcal{FS}_k(M)$ denotes the set of the elements of $\mathcal{F}(M)$ whose support contains at most $k$ elements (see \cite{ACP24} for the definition of the support of an element $\gamma \in \mathcal{F}(M)$ and related results). It follows from \cite{ACP21} that $\mathcal{FS}_k(M)$ is weakly closed and hence closed in the norm topology. 
According to Theorem C in \cite{ACP21}, $\mathcal{FS}_k(M)$ satisfies a version of the Schur property (although it is not a Banach space): for any sequence $(\gamma_n)_n \subset \mathcal{FS}_k(M)$,
\[
\gamma_n \overset{w}{\longrightarrow} \gamma \iff \gamma_n \overset{\|\cdot\|}{\longrightarrow} \gamma,
\]
where $w$ denotes the weak topology.
\smallskip

For an arbitrary map $f: M \longrightarrow N$, $C_f$ denotes the composition operator induced by $f$. The composition operator $C_f$ is defined by:
$$\begin{array}{lcccccc}
C_f : & \mathbb{C}^N & \longrightarrow & \mathbb{C}^M & & &  \\
           & g & \longmapsto     & g \circ f.
\end{array}
$$
The composition operators appear in a very natural way as the adjoint of Lipschitz operators since: 
$$\langle (\widehat{f})^*(g),\delta(x)\rangle=\langle g,\widehat{f}(\delta(x))\rangle=\langle g,\delta(f(x))\rangle=g(f(x)), ~  g \in \mathrm{Lip}_0(N)$$
Hence, for $f \in \mathrm{Lip}_0(M)$, $(\widehat{f})^*=C_f$ seen as an operator from $\Lip_0(N)$ to $\Lip_0(M)$.
\smallskip

If $N \subset M$ with $0 \in N$, then, in the real case (i.e the maps $f$ have real values), $\mathcal{F}(N)$ is a subspace of $\mathcal{F}(M)$. In the complex case, $\mathcal{F}(N)$ is isomorphic to a subspace of $\mathcal{F}(M)$.

Recall, for all practical purposes, the McShane-Whitney's extension theorem:

\begin{lemma}[McShane-Whitney's extension theorem]\label{lemma:McShane}
    Let $E$ be a subset of $M$ et $g : E \longrightarrow \mathbb{R}$ be a Lipschitz map. There exists $f : M \longrightarrow \mathbb{R}$ Lipschitz such that $\restriction{f}{E}=g$ and $\|f\|_L=\|g\|_L$.
\end{lemma}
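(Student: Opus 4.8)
The plan is to produce the extension explicitly by an infimum-convolution formula (the classical McShane construction) and then verify the three required properties by direct manipulation of the triangle inequality. Writing $L=\|g\|_L$ for the Lipschitz constant of $g$ on $E$ and assuming $E\neq\emptyset$ (otherwise the statement is vacuous), I would set
$$f(x)=\inf_{y\in E}\bigl(g(y)+L\,d(x,y)\bigr),\qquad x\in M.$$
The first thing to check is that $f$ is well defined, i.e.\ that this infimum is finite for every $x\in M$. Fixing a reference point $y_0\in E$ and combining the estimate $g(y)\geq g(y_0)-L\,d(y,y_0)$ with the triangle inequality $d(y,y_0)\leq d(x,y)+d(x,y_0)$ yields $g(y)+L\,d(x,y)\geq g(y_0)-L\,d(x,y_0)$ for every $y\in E$, so the quantity under the infimum is bounded below and $f(x)\in\mathbb{R}$.

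Next I would verify that $f$ extends $g$. For $x\in E$, taking $y=x$ in the defining infimum gives $f(x)\leq g(x)$; conversely, for any $y\in E$ the Lipschitz bound $g(y)\geq g(x)-L\,d(x,y)$ gives $g(y)+L\,d(x,y)\geq g(x)$, whence $f(x)\geq g(x)$. Therefore $\restriction{f}{E}=g$. Then, for the Lipschitz estimate, I would fix $x_1,x_2\in M$ and note that for every $y\in E$,
$$g(y)+L\,d(x_1,y)\leq g(y)+L\,d(x_2,y)+L\,d(x_1,x_2),$$
so taking the infimum over $y$ gives $f(x_1)\leq f(x_2)+L\,d(x_1,x_2)$; by symmetry $|f(x_1)-f(x_2)|\leq L\,d(x_1,x_2)$, that is $\|f\|_L\leq L$.

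It remains to upgrade this to the norm equality. Since $f$ agrees with $g$ on $E$, its Lipschitz constant computed over all of $M$ dominates the Lipschitz constant of $g$ computed over the smaller set $E$, so $\|f\|_L\geq L=\|g\|_L$; combined with the bound just proved, this forces $\|f\|_L=\|g\|_L$. The argument presents no serious obstacle: the only step that genuinely uses the hypothesis is the finiteness of the infimum, where the uniform Lipschitz control of $g$ is exactly what prevents the formula from degenerating to $-\infty$, and every other step is a routine application of the triangle inequality. I would perhaps add the remark that the dual formula $f(x)=\sup_{y\in E}\bigl(g(y)-L\,d(x,y)\bigr)$ provides the largest $L$-Lipschitz extension, whereas the infimum formula provides the smallest, so the set of norm-preserving extensions is in general far from unique.
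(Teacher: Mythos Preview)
Your proof is correct and is the standard McShane construction; the paper itself does not prove this lemma, merely recalling it as a classical result before using it repeatedly via Lemma~\ref{lemma:McShane}. One small slip in your final remark: the infimum formula you wrote down is in fact the \emph{largest} $L$-Lipschitz extension (any other $L$-Lipschitz extension $h$ satisfies $h(x)\le g(y)+L\,d(x,y)$ for all $y\in E$, hence $h\le f$), while the supremum formula gives the smallest; this does not affect the validity of the main argument.
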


If the function $g$ is complex valued, extending real and imaginary parts yields an extension $f$ with Lipchitz norm bounded above by $\sqrt{2}\|g\|_L$.

Finally, let us recall that if $M$ is not complete, where $\overline{M}$ is its completion, then $\mathcal{F}(M)$ and $\mathcal{F}(\overline{M})$ are isometrically isomorphic.

\subsection{Weighted Lipschitz operators} 
This article deals with weighted Lipschitz operators, so let us recall how they are built. First, let us fix a map for $\omega: M \longrightarrow \mathbb{C}$ referred to as the weight. 
\begin{defn} For $f : M \longrightarrow N$ any map, we define the weighted Lipschitz operator associated to $f$:
\[
\fonction{\omega\widehat{f}}{\mathrm{span}(\delta(M))}{\mathrm{span}(\delta(N))}{\gamma=\sum\limits_{i=1}^{n} a_i\delta(x_i)}{\sum\limits_{i=1}^{n} a_i\omega(x_i)\delta(f(x_i)).}
\]
\end{defn}
\begin{defn} We also define the weighted composition operators induced by $f: M \longrightarrow N$ a map between two pointed metric spaces:

$$\fonction{\omega C_f }{\mathrm{Lip}_0(N)}{\mathrm{Lip}_0(M)}{g}{x \in M \longmapsto \omega(x)g(f(x)).}$$
\end{defn}

In \cite{ACP24}, the following result is shown: 

\begin{thm}{(Abbar, Coine, Petitjean, \cite{ACP24})}
Let $M,N$ be pointed metric spaces. Let $\omega: M \longrightarrow \mathbb{C}$ and $f: M \longrightarrow N$ be any maps such that $f(0_M)=0_N$ or $\omega(0_M)=0$. The following assertions are equivalent: 
\begin{enumerate}
    \item[(i)] $\omega\widehat{f}$ extends to a bounded operator from $\mathcal{F}(M)$ to $\mathcal{F}(N)$;
    \item[(ii)] 
    $\omega C_f$ defines a bounded operator from $\mathrm{Lip}_0(N)$ to $\mathrm{Lip}_0(M)$ and $(\omega\widehat{f})^*=\omega C_f$;
    \item[(iii)] For all $g \in \mathrm{Lip}_0(N)$, $\omega C_f(g) \in \mathrm{Lip}_0(M)$;
    \item[(iv)] $\varphi: x \in M \longmapsto \omega(x)\delta(f(x)) \in \mathcal{F}(N)$ is Lipschitz. 
\end{enumerate}
\end{thm}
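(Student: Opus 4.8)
The plan is to run the cycle $(\text{iv})\Rightarrow(\text{i})\Rightarrow(\text{ii})\Rightarrow(\text{iii})\Rightarrow(\text{i})$ together with the converse $(\text{i})\Rightarrow(\text{iv})$, so that all four statements become equivalent; the two engines are the universal extension property of $\mathcal F(M)$ on one side and a pair of adjunction identities plus a closed graph argument on the other. As a preliminary remark I would note that the hypothesis "$f(0_M)=0_N$ or $\omega(0_M)=0$" is exactly what makes $\omega\widehat f$ well defined as a linear map on $\mathrm{span}(\delta(M))$: since $\delta(0_M)=0$, consistency forces the single relation $\omega(0_M)\delta(f(0_M))=0$, which is precisely the stated alternative.

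For $(\text{iv})\Leftrightarrow(\text{i})$: the map $\varphi\colon x\mapsto \omega(x)\delta(f(x))\in\mathcal F(N)$ satisfies $\varphi(0_M)=0$ by the same basepoint hypothesis, so if $\varphi$ is Lipschitz the universal extension property (quoted from \cite{Godefroy_Kalton} in the introduction, applied with the complex Banach space $X=\mathcal F(N)$) produces a unique bounded linear $\overline\varphi\colon\mathcal F(M)\to\mathcal F(N)$ with $\overline\varphi\circ\delta_M=\varphi$ and $\|\overline\varphi\|=\|\varphi\|_L$; since $\overline\varphi$ and $\omega\widehat f$ agree on $\delta(M)$ they agree on $\mathrm{span}(\delta(M))$, so $\overline\varphi$ is the desired bounded extension. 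Conversely, if $\omega\widehat f$ extends to a bounded $T\colon\mathcal F(M)\to\mathcal F(N)$, then $\varphi=T\circ\delta_M$ is a composition of the isometry $\delta_M$ with $T$, hence Lipschitz with $\|\varphi\|_L\le\|T\|$.

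For the linear half: $(\text{ii})\Rightarrow(\text{iii})$ is immediate. For $(\text{i})\Rightarrow(\text{ii})$, write $T$ for the bounded extension of $\omega\widehat f$; its adjoint $T^*\colon\mathrm{Lip}_0(N)\to\mathrm{Lip}_0(M)$ is bounded, and evaluating on generators gives $\langle T^*g,\delta_M(x)\rangle=\langle g,\omega(x)\delta_N(f(x))\rangle=\omega(x)g(f(x))=(\omega C_f(g))(x)$ for every $x$, so $T^*g=\omega C_f(g)$; hence $\omega C_f$ is bounded and $(\omega\widehat f)^*=\omega C_f$. The implication $(\text{iii})\Rightarrow(\text{i})$ is the only one needing a little care: under (iii) the map $\omega C_f$ is a well defined linear map $\mathrm{Lip}_0(N)\to\mathrm{Lip}_0(M)$, and it is bounded by the closed graph theorem — if $g_n\to g$ and $\omega C_f(g_n)\to h$ in the Lipschitz norms, both convergences are pointwise (because $|\psi(x)|\le\|\psi\|_L\,d(x,0)$), and pointwise $\omega C_f(g_n)(x)=\omega(x)g_n(f(x))\to\omega(x)g(f(x))$, forcing $h=\omega C_f(g)$. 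Then $(\omega C_f)^*\colon\mathrm{Lip}_0(M)^*\to\mathrm{Lip}_0(N)^*$ is bounded, sends $\delta_M(x)$ to $\omega(x)\delta_N(f(x))\in\mathcal F(N)$ by the same evaluation computation, hence sends $\mathrm{span}(\delta(M))$ into $\mathcal F(N)$ and, by continuity and closedness of $\mathcal F(N)$, sends all of $\mathcal F(M)$ into $\mathcal F(N)$; its restriction to $\mathcal F(M)$ is a bounded operator extending $\omega\widehat f$.

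The only genuine obstacle here is one of bookkeeping: the assertion "$(\omega\widehat f)^*=\omega C_f$" in (ii) only makes sense once $\omega\widehat f$ is known to be bounded, so the order of the argument matters — one must first manufacture a bounded extension (via the universal property in $(\text{iv})\Rightarrow(\text{i})$, or via $(\omega C_f)^*$ in $(\text{iii})\Rightarrow(\text{i})$) and only afterwards identify its adjoint with $\omega C_f$. Everything else is routine manipulation of the canonical duality $\mathcal F(M)^*=\mathrm{Lip}_0(M)$ and of the fact that $\mathrm{span}(\delta(M))$ is dense in $\mathcal F(M)$.
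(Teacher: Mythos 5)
This statement is quoted in the paper as an external result of Abbar, Coine and Petitjean (\cite{ACP24}); the paper itself gives no proof of it, so there is no internal argument to compare yours against. On its own merits your proof is correct and complete: the well-definedness remark about $\omega(0_M)\delta(f(0_M))=0$, the equivalence $(iv)\Leftrightarrow(i)$ via the universal extension property, the adjoint computation for $(i)\Rightarrow(ii)$, and above all the two key moves in $(iii)\Rightarrow(i)$ — the closed graph theorem (using that convergence in $\|\cdot\|_L$ implies pointwise convergence because $|\psi(x)|\le\|\psi\|_L\,d(x,0_M)$) to get boundedness of $\omega C_f$, followed by restricting $(\omega C_f)^*$ to $\mathcal F(M)$ and using that $\mathcal F(N)$ is norm-closed in $\mathrm{Lip}_0(N)^*$ — are all sound, and this is precisely the ``pre-adjoint approach'' advertised in the title of \cite{ACP24}. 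The only point worth flagging is cosmetic: since the paper works with complex scalars, the isometric statement $\|\overline\varphi\|=\|\varphi\|_L$ of the universal property may pick up a constant in the complex setting (as the paper itself notes for McShane extension), but this does not affect the equivalence $(iv)\Leftrightarrow(i)$, which only needs boundedness up to a constant.
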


One of the main objectives of this article is to study operator properties which are closely related to norm or weak compactness. To that end, the following characterization, established in \cite{ACP24}, will prove handy.

\begin{prop}{(Abbar, Coine, Petitjean, \cite{ACP24})}\label{prop:equivWeak}
Let $M,N$ be metric spaces with distinguished points $0_M \in M$ and $0_N \in N$. Let $f: M \longrightarrow N$ and $\omega: M \longrightarrow \mathbb{R}$ satisfy $f(0_M)=0_N$ or $\omega(0_M)=0$. If $\omega\widehat{f}$ is a bounded operator, then it is (weakly) compact if and only if 
\[
\omega\widehat{f}(\mathcal{M})=\left\{\frac{\omega(x)\delta(f(x))-\omega(y)\delta(f(y))}{d(x,y)} ~  | ~  x \neq y \in M\right\}
\]
is relatively (weakly) compact.
\end{prop}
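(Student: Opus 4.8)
The plan is to reduce everything to the classical description of the unit ball of $\mathcal{F}(M)$ in terms of the molecule set $\mathcal{M}$. Write $\overline{\Gamma}(A)$ for the closed absolutely convex hull of a subset $A$ of a Banach space. I would first record two facts: since $\delta_M$ is an isometry each molecule has norm one, so $\mathcal{M} \subset B_{\mathcal{F}(M)}$; and since $\sup_{m \in \mathcal{M}} |\langle g, m \rangle| = \|g\|_L$ for every $g \in \mathrm{Lip}_0(M) = \mathcal{F}(M)^*$, a Hahn--Banach separation carried out inside $\mathcal{F}(M)$ gives $\overline{\Gamma}(\mathcal{M}) = B_{\mathcal{F}(M)}$. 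As $\omega\widehat{f}$ is a bounded linear map it sends $\Gamma(\mathcal{M})$ onto $\Gamma(\omega\widehat{f}(\mathcal{M}))$, hence $\omega\widehat{f}\big(B_{\mathcal{F}(M)}\big) = \omega\widehat{f}\big(\overline{\Gamma}(\mathcal{M})\big) \subset \overline{\Gamma}\big(\omega\widehat{f}(\mathcal{M})\big)$; together with the trivial inclusion $\omega\widehat{f}(\mathcal{M}) \subset \omega\widehat{f}(B_{\mathcal{F}(M)})$ this produces the sandwich
\[
\omega\widehat{f}(\mathcal{M}) \;\subset\; \omega\widehat{f}\big(B_{\mathcal{F}(M)}\big) \;\subset\; \overline{\Gamma}\big(\omega\widehat{f}(\mathcal{M})\big),
\]
which is the whole point.

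For the compact case, I would use that $\omega\widehat{f}$ is compact exactly when $\omega\widehat{f}(B_{\mathcal{F}(M)})$ is relatively compact. If $\omega\widehat{f}(\mathcal{M})$ is relatively compact then $\overline{\mathbb{D}} \cdot \overline{\omega\widehat{f}(\mathcal{M})}$ is compact ($\mathbb{D}$ the closed unit disc of the scalar field, and a continuous image of a compact set), and Mazur's theorem — the closed convex hull of a norm-compact subset of a Banach space is compact — shows $\overline{\Gamma}(\omega\widehat{f}(\mathcal{M}))$ is compact, whence $\omega\widehat{f}(B_{\mathcal{F}(M)})$ is relatively compact by the sandwich. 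The converse is immediate from the leftmost inclusion, a subset of a relatively compact set being relatively compact.

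The weakly compact case follows the same skeleton with two standard substitutions: $\omega\widehat{f}$ is weakly compact iff $\omega\widehat{f}(B_{\mathcal{F}(M)})$ is relatively weakly compact, and Mazur's theorem is replaced by the Krein--\v{S}mulian theorem that the closed convex hull of a weakly compact subset of a Banach space is weakly compact. The one place where a careless argument could slip — and the only real subtlety here — is that we are dealing with an \emph{absolutely} convex hull and the weak topology: I would note that scalar multiplication $(\lambda, \gamma) \mapsto \lambda\gamma$ is continuous from $\overline{\mathbb{D}} \times (\mathcal{F}(N), w)$ to $(\mathcal{F}(N), w)$, so that $\overline{\mathbb{D}} \cdot \overline{\omega\widehat{f}(\mathcal{M})}^{\,w}$ is weakly compact when $\omega\widehat{f}(\mathcal{M})$ is relatively weakly compact, and that a subset of a relatively weakly compact set is again relatively weakly compact (a weakly compact set being weakly closed). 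Then Krein--\v{S}mulian gives that $\overline{\Gamma}(\omega\widehat{f}(\mathcal{M}))$ is weakly compact and one finishes exactly as in the compact case. Beyond the molecule description of $B_{\mathcal{F}(M)}$, which is classical, there is no genuine obstacle: the rest is bookkeeping with weak topologies.
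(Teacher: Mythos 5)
Your argument is correct: the identity $B_{\mathcal{F}(M)}=\overline{\Gamma}(\mathcal{M})$ (bipolar/Hahn--Banach, using $\sup_{m\in\mathcal{M}}|\langle g,m\rangle|=\|g\|_L$), the resulting sandwich $\omega\widehat{f}(\mathcal{M})\subset\omega\widehat{f}(B_{\mathcal{F}(M)})\subset\overline{\Gamma}\bigl(\omega\widehat{f}(\mathcal{M})\bigr)$, and Mazur resp.\ Krein--\v{S}mulian (with the correct care about $\overline{\mathbb{D}}\cdot K$ in the weak topology) give both directions. The paper itself states this proposition without proof, citing \cite{ACP24}, and the argument there is the same standard one, so nothing further is needed.
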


A metric characterization of compactness in the non-weighted case is also proved in \cite{ACP21}. Moreover, some sufficient conditions are provided in \cite{ACP24} in the weighted case.

Finally, let us recall an important consequence of Proposition~\ref{prop:equivWeak} and the fact that $\mathcal{FS}_2(N)$ has the Schur property (see \cite{ACP24}): if $M,N$ are pointed complete metric spaces, and if $\omega\widehat{f}:\F(M) \to \F(N)$ is a weighted Lipschitz operator, then one has:
\[
\omega\widehat{f} \text{ is compact } \iff \omega\widehat{f} \text{ is weakly compact.}
\]

\subsection{Strictly singular and strictly cosingular operators} Finally, this article relates compactness to the notions of strictly singular and strictly cosingular operators. Let us recall the corresponding definitions below: 

\begin{defn}{}
Let $X,Y$ be Banach spaces. An operator $T: X \longrightarrow Y$ is strictly singular if there exists no infinite-dimensional subspace $E \subset X$ such that $\restriction{T}{E}$
is an isomorphism onto its range.
\end{defn}

\begin{defn}{}
Let $X,Y$ be Banach spaces. An operator $T: X \longrightarrow Y$ is strictly cosingular if, for every subspace $Z \subset Y$ such that $\dim\left(Y/Z\right)=\infty$, the map $Q \circ T: X \longrightarrow Y/Z$ is not surjective, where $Q: Y \longrightarrow Y/Z$ is the canonical quotient map.
\end{defn}

\begin{remark}\label{SCScomp}
    See Pełczyński's article \cite{Pel} for more details on strictly cosingular operators. Note that a compact operator is clearly strictly singular, and also strictly cosingular (it is a consequence of the open mapping theorem), but both converses are false in general.
\end{remark}

The notion of strictly cosingularity was introduced by Pełczyński, probably as an attempt to obtain a dual notion of strict singularity. For instance, the following result holds: 

\begin{prop}{}\label{propSS}
Let $X,Y$ be Banach spaces and $T: X \longrightarrow Y$. Then, 
\begin{enumerate}
    \item[a)] if the adjoint operator $T^*: Y^* \longrightarrow X^*$ is strictly singular (resp. cosingular), then $T$ is strictly cosingular (resp. singular);
    \item[b)] if $T$ is strictly cosingular and weakly compact, then $T^*$ is strictly singular;
    \item[c)] if $T$ is strictly singular and $X$ is reflexive, then $T^*$ is strictly cosingular.
\end{enumerate}
\end{prop}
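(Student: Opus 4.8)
The plan is to reduce all three items to a small toolkit of standard duality facts, which I will use without further comment: for an operator $S$ between Banach spaces, $S$ is surjective if and only if $S^*$ is bounded below (that is, an isomorphism onto its range), and $S^*$ is surjective whenever $S$ is bounded below; for a closed subspace $W$ of a Banach space $V$, the adjoint $Q_W^*$ of the quotient map $V \to V/W$ is an isometric isomorphism of $(V/W)^*$ onto the annihilator $W^\perp$, and the adjoint of an inclusion $E \hookrightarrow V$ is the restriction map $V^* \twoheadrightarrow E^*$; and an infinite-dimensional Banach space has infinite-dimensional dual. I would do the two purely formal implications a) and c) first, and leave b), which is the one genuinely using weak compactness, for last.

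\emph{Part a).} Suppose $T$ is not strictly cosingular and pick a closed $Z \subseteq Y$ with $\dim(Y/Z) = \infty$ and $Q_Z T$ surjective. Then $T^* Q_Z^* = (Q_Z T)^*$ is bounded below, and since $Q_Z^*$ is an isometric isomorphism onto $Z^\perp \cong (Y/Z)^*$, the restriction of $T^*$ to the infinite-dimensional subspace $Z^\perp$ is bounded below, contradicting strict singularity of $T^*$. Symmetrically, suppose $T$ is not strictly singular and pick an infinite-dimensional $E \subseteq X$ with $T|_E$ bounded below, with inclusion $i \colon E \hookrightarrow X$. The adjoint $i^* \colon X^* \to E^*$ is the (surjective) restriction map, so it identifies $X^*/E^\perp$ with the infinite-dimensional space $E^*$; under this identification $Q_{E^\perp} T^*$ becomes $i^* T^* = (T|_E)^*$, which is surjective, contradicting strict cosingularity of $T^*$.

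\emph{Part c).} Here $X$ reflexive is used precisely to identify $X^{**}$ with $X$. Suppose $T^*$ is not strictly cosingular and pick a closed $W \subseteq X^*$ with $\dim(X^*/W) = \infty$ and $Q_W T^*$ surjective. Then $(Q_W T^*)^* = T^{**} Q_W^*$ is bounded below; since $Q_W^*$ identifies $(X^*/W)^* = W^\perp$ with an infinite-dimensional subspace of $X^{**} = X$, on which $T^{**}$ restricts to $T$, we conclude that $T$ is bounded below on an infinite-dimensional subspace of $X$, contradicting strict singularity of $T$.

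\emph{Part b).} This is where the work is, as it is the only implication in which weak compactness genuinely enters. Suppose $T^*$ is not strictly singular and fix an infinite-dimensional closed $E \subseteq Y^*$ with $T^*|_E$ bounded below. The first key step is to observe that $E$ is reflexive: $T$, hence $T^*$, is weakly compact, so $T^*(B_E)$ is relatively weakly compact, while boundedness below of $T^*|_E$ gives $c\,B_{T^*(E)} \subseteq T^*(B_E)$ for some $c > 0$; thus the closed subspace $T^*(E)$ of $X^*$ has relatively weakly compact ball and is reflexive, and $E \cong T^*(E)$ is reflexive. The second key step is to predualize the inclusion $j \colon E \hookrightarrow Y^*$: define $R \colon Y \to E^*$ by $Ry = e \mapsto e(y)$, a bounded operator for which a direct computation using $E = E^{**}$ gives $R^* = j$. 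Then $T^*|_E = T^* R^* = (RT)^*$ is bounded below, so $RT \colon X \to E^*$ is surjective; in particular $R$ is surjective, so with $Z := \ker R$ the open mapping theorem gives $Y/Z \cong E^*$, which is infinite-dimensional, and modulo this isomorphism $Q_Z T$ coincides with $RT$, hence is surjective --- contradicting strict cosingularity of $T$. I expect the reflexivity of $E$ and the verification that $R^* = j$ to be the two points that require care; everything else is the same bookkeeping with adjoints as in parts a) and c).
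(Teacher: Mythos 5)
The paper gives no proof of Proposition~\ref{propSS}: it is stated as a known result with a pointer to Pe\l{}czy\'nski's article \cite{Pel}, so there is no in-paper argument to compare yours against. Your proof is correct and self-contained. Parts a) and c) are indeed pure adjoint bookkeeping: the duality facts you invoke ($S$ surjective iff $S^*$ bounded below; $S$ bounded below implies $S^*$ surjective) and the identifications $(V/W)^*\cong W^{\perp}$ and $X^*/E^{\perp}\cong E^*$ are all standard, and the dimension counts go through because the dual of an infinite-dimensional normed space is infinite-dimensional. Part b) is the only genuinely delicate item, and you handle both of its delicate points correctly: the inclusion $c\,B_{T^*(E)}\subseteq T^*(B_E)$ does follow from $\|T^*e\|\geq c\|e\|$ on $E$, and since $B_{T^*(E)}$ is convex and norm-closed it is weakly closed, so relative weak compactness (via Gantmacher) upgrades to weak compactness and $T^*(E)$, hence $E$, is reflexive by Kakutani's theorem; the computation $(R^*\iota_E(e))(y)=e(y)$ then identifies $R^*$ with the inclusion $j$ precisely because $\iota_E$ is onto, which is where reflexivity of $E$ is indispensable --- without it $R^*$ lives on $E^{**}\supsetneq E$ and the factorization $T^*|_E=(RT)^*$ breaks down. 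The final step (passing from surjectivity of $RT$ to surjectivity of $Q_ZT$ with $Z=\ker R$ and $Y/Z\cong E^*$ infinite-dimensional) is also sound. In short: a complete, correct proof of a statement the paper only cites.
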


In general, the converse implication of a) is not true, Pełczyński gave counter--examples in \cite{Pel} (see Examples 1 and 2 in Section~II.2).
In the articles \cite{Pli} and \cite{Flores}, the authors introduced stronger notions, leading this time to dual properties, with success.

\begin{defn}\label{SS}
Let $X$ and $Y$ be Banach spaces, and let $T: X \to Y$. We say that $T$ is \emph{super strictly singular if:}
\[
\forall \varepsilon>0, \exists N_{\varepsilon} \in \mathbb{N}^*, \forall E \subset X, \dim(E) \geq N_{\varepsilon}, \exists x \in S_E \text{ such that } \|T(x)\| \leq \varepsilon,
\]
that is,
\[
b_n(T):=\sup_{E \subset X, \dim(E)=n} \inf \{\|T(x)\|: x \in S_E\} \underset{n \rightarrow +\infty}{\longrightarrow} 0.
\]
\end{defn}

\begin{defn}\label{SCS}
Let $X$ and $Y$ be Banach spaces, and let $T: X \to Y$. We say that $T$ is \emph{super strictly cosingular} if:
\[
a_n(T):=\sup\{\alpha_E(T): E \subset Y, \dim(Y/E)=n\} \underset{n \rightarrow +\infty}{\longrightarrow} 0,
\]
where: 
\[
\alpha_E(T):=\sup\{\alpha \geq 0: \alpha B_{Y/E} \subset Q \circ T(B_X)\},
\]
and where $Q: Y \longrightarrow Y/E$ is the canonical quotient map.
\end{defn}

It is a routine exercise to verify that if $T$ is super strictly singular (respectively, super strictly cosingular), then $T$ is strictly singular (respectively, strictly cosingular). The converse implications, however, do not hold in general; (see \cite{Pli}). Moreover, it is immediate that every compact operator is both super strictly singular and super strictly cosingular, although the converse implications are again false in general.

\begin{remark}\label{SSCScomp}
    The following results are proved in \cite{Flores}.
\begin{enumerate}[leftmargin=*]
    \item[-] With the notations in Definition~\ref{SS} and Definition~\ref{SCS}, $a_n(T)=b_n(T^*)$ and $b_n(T)=a_n(T^*)$. In particular, 
    T is super strictly singular (resp. cosingular) if and only if $T^*$ is super strictly cosingular (resp. singular).

    \item[-] The prefix ``super'' is well chosen, as it aligns naturally with the terminology arising from the theory of ultraproducts of Banach spaces. Indeed, an operator $T : X \to Y$ is super strictly singular (respectively, super strictly cosingular) if and only if, for every ultrafilter $\mathcal U$, the induced ultra-operator $T_{\mathcal U} : X_{\mathcal U} \longrightarrow Y_{\mathcal U}$
    is strictly singular (respectively, strictly cosingular).

\end{enumerate}
\end{remark}

\section{Equivalent properties for \texorpdfstring{$\omega\widehat{f}$}{weighted Lipschitz operators}}
Several implications from Pe\l{}czy\'nski’s classical theorem are known to extend to the setting of weighted Lipschitz operators. As already mentioned, $\omega\widehat{f}$ is weakly compact if and only if it is compact. It is a standard fact that compactness implies both strict singularity and strict cosingularity, and that either of these properties entails not fixing any complemented copy of $\ell_1$. These considerations motivate our investigation of whether the equivalences obtained in Pe\l{}czy\'nski’s result continue to hold for the class of operators considered here. As shown by our main result, this is indeed the case. For the reader's convenience, we recall its statement below.

\begingroup
\renewcommand{\thethm}{\ref{thm:main}}
\begin{thm}
Let $M,N$ be infinite metric spaces, $f: M \longrightarrow N$ and $\omega: M \longrightarrow \mathbb{C}$ such that $\fonction{\omega\widehat{f}}{\mathcal{F}(M)}{\mathcal{F}(N)}{\delta(x)}{\omega(x)\delta(f(x))}$ is a bounded operator.\\ The following properties are equivalent: 
\begin{enumerate}
    \item[(i)] $\omega\widehat{f}$ is compact;
    \item[(ii)] $\omega\widehat{f}$ is weakly compact;
    \item[(iii)] $\omega\widehat{f}$ is super strictly singular;
    \item[(iv)] $\omega\widehat{f}$ is super strictly cosingular;
    \item[(v)] $\omega\widehat{f}$ is strictly singular;
    \item[(vi)] $\omega\widehat{f}$ is strictly cosingular;
    \item[(vii)] $\omega\widehat{f}$ does not fix any copy of $\ell^1$;
    \item[(viii)] $\omega\widehat{f}$ does not fix any complemented copy of $\ell^1$. 
\end{enumerate}
\end{thm}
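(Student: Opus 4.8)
The plan is to prove the cycle of implications by establishing that the only two ``endpoints'' that require real work are $(vii)\Rightarrow(i)$ (the substantial direction) and the strengthening of the trivial implications to the ``super'' versions. Most arrows are free: $(i)\Rightarrow(ii)$ is trivial; $(i)\Rightarrow(iii)$ and $(i)\Rightarrow(iv)$ hold for every compact operator; $(iii)\Rightarrow(v)$ and $(iv)\Rightarrow(vi)$ are the routine ``super implies ordinary'' implications recalled before the statement; $(v)\Rightarrow(vii)$ and $(vi)\Rightarrow(viii)$ follow because an isomorphic (resp. complemented isomorphic) copy of $\ell^1$ on which $T$ is an isomorphism is forbidden by strict singularity (resp., for cosingularity, one uses that a complemented copy of $\ell^1$ in the range that $T$ hits gives a quotient of infinite codimension onto which $QT$ is onto); and $(vii)\Rightarrow(viii)$ is immediate since a complemented copy is in particular a copy. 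So the whole theorem reduces to closing the loop with $(vii)\Rightarrow(i)$, i.e.\ \emph{if $\omega\widehat{f}$ is not compact then it fixes a copy of $\ell^1$} (and then separately noting that non-compactness also produces the failures of the super properties, or simply routing (iii),(iv) back through (i)).

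The heart of the argument is therefore: assume $\omega\widehat{f}$ is not compact and build inside $\F(M)$ an isomorphic copy of $\ell^1$ spanned by molecules on which $\omega\widehat{f}$ acts as an isomorphism. By Proposition~\ref{prop:equivWeak}, non-compactness means $\omega\widehat{f}(\mathcal M)$ is not relatively compact, so there is a sequence $x_n\neq y_n$ in $M$ and $\varepsilon_0>0$ such that the image molecules $m_n:=\dfrac{\omega(x_n)\delta(f(x_n))-\omega(y_n)\delta(f(y_n))}{d(x_n,y_n)}$ are $\varepsilon_0$-separated in $\F(N)$ (after passing to a subsequence; they are bounded since $\omega\widehat{f}$ is bounded). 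The first step is to extract from $\bigl(\dfrac{\delta(x_n)-\delta(y_n)}{d(x_n,y_n)}\bigr)_n$ a subsequence that is \emph{equivalent to the $\ell^1$ basis} in $\F(M)$: this is where one exploits that $\F(M)$, while not Schur in general, contains $\ell^1$-spanning molecule sequences in abundance — concretely, one uses a gliding-hump / separation argument on the supports $\{x_n,y_n\}$ (passing to a further subsequence so the pairs are uniformly separated from one another and from the basepoint, and the diameters $d(x_n,y_n)$ are comparable along the subsequence, possibly after splitting into the ``bounded below'' and ``shrinking'' cases), together with the standard fact that a separated, uniformly discrete family of molecules with supports that are ``far apart'' spans $\ell^1$ isometrically (or almost isometrically) via McShane–Whitney extensions of $\pm1$-valued functions to produce the lower $\ell^1$ estimate. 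The second step is to check $\omega\widehat{f}$ restricted to $E:=\overline{\linsp}\{\text{these molecules}\}$ is bounded below: the upper estimate $\bigl\|\sum a_n m_n\bigr\|\le \|\omega\widehat{f}\|\sum|a_n|$ is automatic, and the lower estimate is obtained by testing against Lipschitz functions on $N$ separating the (subsequence of) supports $\{f(x_n),f(y_n)\}$ — here one needs that these image supports are themselves separated, which is exactly what the $\varepsilon_0$-separation of the $m_n$ provides after one more extraction (if $f(x_n)$ and $f(y_n)$ collided or drifted together, the molecule $m_n$ would be small, contradicting separation, unless it is carried by a single point, a case handled directly).

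The main obstacle I anticipate is the combinatorial bookkeeping in the first step: controlling simultaneously the mutual separation of the pairs $\{x_n,y_n\}$, their distance to $0_M$, the comparability of the scales $d(x_n,y_n)$, and the corresponding data downstairs for $\{f(x_n),f(y_n)\}$, while ensuring the extracted molecule sequence is genuinely $\ell^1$ and not merely weakly null. The clean way to handle this is to invoke (or reprove) the known structural lemma that a sequence of molecules $\dfrac{\delta(u_n)-\delta(v_n)}{d(u_n,v_n)}$ whose ``supports'' $\{u_n,v_n\}$ are pairwise at distance bounded below by a fixed multiple of $\max_n\operatorname{diam}\{u_n,v_n\}$ (or, in the shrinking case, where the diameters tend to $0$ much faster than the gaps) is $2$-equivalent to the $\ell^1$-basis — this is essentially contained in the cited works on $\mathcal{FS}_k$ and Schur-type properties — and then to show by a pigeonhole/Ramsey extraction that non-compactness of $\omega\widehat{f}(\mathcal M)$ forces such a configuration. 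Once $(vii)\Rightarrow(i)$ is in hand, I would close the diagram and, for completeness, remark that $(iii)$ and $(iv)$ also follow from $(i)$ directly so no independent argument for the super versions is needed beyond the trivial ``super $\Rightarrow$ ordinary''.
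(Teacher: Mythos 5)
There is a genuine gap, and it is structural: your implication diagram does not close. You route everything through $(vii)\Rightarrow(i)$, but in your scheme the conditions $(ii)$, $(iv)$, $(vi)$ and $(viii)$ are only ever shown to \emph{follow} from $(i)$ --- none of them is shown to imply anything else. Since $(viii)$ (not fixing a \emph{complemented} copy of $\ell^1$) is the weakest condition in the list, proving $(vii)\Rightarrow(i)$ leaves it, and with it the whole ``cosingular'' column and $(ii)$, outside the equivalence class. What is actually needed is $(viii)\Rightarrow(i)$: if $\omega\widehat{f}$ is not compact, it must fix a \emph{complemented} copy of $\ell^1$, i.e.\ the span of the extracted molecules must be complemented in $\mathcal{F}(M)$ and the span of their images complemented in $\mathcal{F}(N)$. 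This complementation is the real content of the paper's argument (its Proposition~\ref{prop:cl1} together with Lemma~\ref{lemma:Projection}): for an $\ell^1$-basis-equivalent sequence in $\mathcal{FS}_2$, one extracts a subsequence according to whether the supporting points separate or converge (four cases), builds disjointly supported Lipschitz functions biorthogonal to the sequence via Hahn--Banach and McShane--Whitney, and assembles them into a bounded projection using the universal property of $\mathcal{F}(M)$. Nothing in your proposal addresses this, and it is not automatic: an isomorphic copy of $\ell^1$ in a Lipschitz-free space need not be complemented. Separately, $(ii)$ must also be tied back into the loop, either by the cited equivalence of compactness and weak compactness for these operators (the Schur property of $\mathcal{FS}_2(N)$), or at least by the standard observation that a weakly compact operator cannot fix a copy of $\ell^1$; you state only the trivial direction $(i)\Rightarrow(ii)$.

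On the substantial step itself, your route is workable but inverted relative to the efficient one. Rather than proving by hand that the \emph{domain} molecules are $\ell^1$-equivalent via a gliding-hump on their supports (which is delicate precisely in the case where all $x_n,y_n$ collapse to one point, and where your separation hypothesis lives downstairs in $N$, not in $M$), it is cleaner to argue as the paper does: non-compactness plus the Schur-type property of $\mathcal{FS}_2(N)$ and Rosenthal's theorem give that the \emph{image} sequence is equivalent to the $\ell^1$ basis; the domain molecules are then automatically $\ell^1$-equivalent, since they have norm at most $1$ (upper estimate) and $\|\omega\widehat{f}\|\,\|\sum a_n m_n\|\geq\|\sum a_n\gamma_n\|\geq c\sum|a_n|$ (lower estimate). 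But even granting your step, the conclusion is only $(vii)\Rightarrow(i)$; without the complementation construction the theorem as stated is not proved.
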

\endgroup

The equivalence $(i) \iff (v) \iff (vii)$ was first established in an unpublished work by the authors of \cite{p1u}. We are grateful to them for allowing us to include their result in this paper. Incidentally, the argument we present below also yields an independent proof of these equivalences.

Thanks to Remarks \ref{SCScomp} and \ref{SSCScomp}, the following implications hold: 
$$(i) \Rightarrow (iii) \Rightarrow (v) \text{ and } (i) \Rightarrow (iv) \Rightarrow (vi)$$
Thanks to Theorem C in \cite{ACP24}, $(i) \iff (ii)$. Also it is clear that $(vi) \Rightarrow (viii)$ and $(vii) \Rightarrow (viii)$. To summarize, the following scheme of implications is already known thanks to previous results:

\begin{figure}[h]
  \centering
\begin{tikzcd}[column sep=1em, row sep=2em]
  & (i) \arrow[rr,Leftrightarrow,"\text{\cite{ACP24}}"] \arrow[dl,Rightarrow] \arrow[dr,Rightarrow] & & (ii) \\
  (iii) \arrow[d,Rightarrow] & & (iv) \arrow[d,Rightarrow] & \\
  (v) \arrow[d,Rightarrow] & & (vi) \arrow[d,Rightarrow] & \\
  (vii) \arrow[rr,Rightarrow] & & (viii) &
\end{tikzcd}
  \caption{Diagram of implications}
  \label{diagram}
\end{figure}
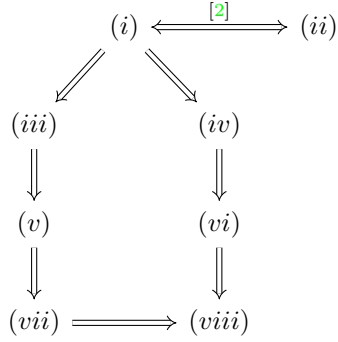

Therefore, it only remains to prove $(viii) \Rightarrow (i)$.
\medskip

By contraposition, let us assume that $\omega\widehat{f}$ is not compact. Thanks to Proposition~\ref{prop:equivWeak}, this means that $\omega\widehat{f}(\mathcal{M})$ is not relatively compact, where 
$$\mathcal{M}=\left\{m_{x,y}=\frac{\delta(x)-\delta(y)}{d(x,y)}~  | ~  x \neq y\right\}.$$ Hence, $\exists x_n,y_n \in M$ such that $(\gamma_n):=(\omega\widehat{f}(m_{x_n,y_n}))$ does not have any norm-convergent subsequence.
In particular, since $\mathcal{FS}_2(N)$ has the Schur property, $(\gamma_n)$ does not admit any weakly Cauchy subsequence. By Rosenthal's $\ell^1$ theorem (\cite{AK}, p.252), passing to a subsequence if necessary, we can assume that $(\gamma_n)$ is equivalent to the canonical basis of $\ell^1$. In order to show the desired implication, we will need the following proposition:

\begin{prop}{}\label{prop:complemented}
\label{prop:cl1}
Let $\gamma_n=a_n\delta(x_n)+b_n\delta(y_n) \in \mathcal{FS}_2(M)$ for all $n \in \mathbb{N}$. We assume that $(\gamma_n)$ is equivalent to the canonical basis of $\ell^1$. Then there exists a subsequence $(\gamma_{n_k})_k$ of $(\gamma_n)$ which is complemented in $\mathcal{F}(M)$.
\end{prop}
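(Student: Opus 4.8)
The plan is to produce, after passing to a subsequence, a bounded sequence of functionals $(\mu_k) \subset \mathrm{Lip}_0(M) = \mathcal{F}(M)^*$ that is biorthogonal to $(\gamma_{n_k})$ and such that the associated projection $P(\gamma) = \sum_k \langle \mu_k, \gamma\rangle \gamma_{n_k}$ is bounded on $\mathcal{F}(M)$ with range the closed span of $(\gamma_{n_k})$. Since $(\gamma_n)$ is already equivalent to the $\ell^1$-basis, the closed span $[\gamma_{n_k}]$ is a copy of $\ell^1$, so boundedness of such a $P$ is exactly what ``complemented'' requires. The key point that makes this feasible is that each $\gamma_n$ lives in $\mathcal{FS}_2(M)$, so it is supported on the two-point set $\{x_n, y_n\}$, and the relevant geometry is finite-dimensional at each stage.

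First I would normalize: since $(\gamma_n) \sim (e_n)$ in $\ell^1$, the norms $\|\gamma_n\|$ are bounded above and below, so WLOG $\|\gamma_n\| = 1$; then I would extract information about the configuration of the four-point sets $\{0, x_n, y_n\}$ (really the relevant distances $d(x_n,y_n)$, $d(x_n, x_m)$, etc.). The natural dichotomy is whether the points $x_n, y_n$ stay in a bounded region and accumulate, or whether distances blow up; in either case, after a Ramsey/diagonal extraction one arranges that the supports are ``sufficiently separated and stable'' — concretely, that there is $\varepsilon > 0$ with $d(x_k, x_\ell), d(x_k, y_\ell), d(y_k, y_\ell) \geq \varepsilon$ for $k \neq \ell$ (handling separately the degenerate situations where, e.g., $x_n \to x_\infty$: there one uses that $\gamma_n$ being $\ell^1$-like forces the ``oscillating'' part to survive, and one can translate the obstruction to a genuinely separated sequence, or treat $\gamma_n$ as essentially a molecule up to a controlled perturbation). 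Then, for each $k$, I would build a Lipschitz function $g_k \in \mathrm{Lip}_0(M)$ with uniformly bounded Lipschitz norm, which ``reads off'' $\gamma_{n_k}$, i.e. $\langle g_k, \gamma_{n_k}\rangle = 1$, while $\langle g_k, \gamma_{n_\ell}\rangle$ is small for $\ell \neq k$ — a bump-type function supported near $\{x_{n_k}, y_{n_k}\}$, rescaled appropriately, using McShane--Whitney (Lemma~\ref{lemma:McShane}) to extend from the finite set to all of $M$.

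With near-biorthogonal uniformly bounded $(g_k)$ in hand, I would correct them to exactly biorthogonal functionals $(\mu_k)$ by a standard Neumann-series / small-perturbation argument (the error operator has small norm on $[\gamma_{n_k}]$ once the off-diagonal terms are summably small, which one secures by fast enough extraction), keeping $\sup_k \|\mu_k\| < \infty$. Finally I would check $P(\gamma) := \sum_k \langle \mu_k, \gamma\rangle \gamma_{n_k}$ is bounded: for this one wants $\big(\langle \mu_k, \gamma\rangle\big)_k \in \ell^1$ with $\ell^1$-norm controlled by $\|\gamma\|$, which should follow because the $\mu_k$'s, being disjointly-ish supported Lipschitz functions of bounded norm, can be combined with arbitrary unimodular signs into a single function of bounded Lipschitz norm (again via McShane--Whitney applied to the union of the well-separated supports), so that $\sum_k |\langle \mu_k, \gamma\rangle| = \sup_{\theta_k} \langle \sum_k \theta_k \mu_k, \gamma\rangle \lesssim \|\gamma\|$; then $\|P\gamma\| \lesssim \sum_k |\langle \mu_k,\gamma\rangle| \lesssim \|\gamma\|$ using the $\ell^1$-equivalence of $(\gamma_{n_k})$.

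The main obstacle I anticipate is the extraction/normalization step handling the degenerate configurations: if $x_n$ and $y_n$ converge (even to the same limit) the molecules $m_{x_n,y_n}$ need not be separated in $\mathcal{F}(M)$, and it is precisely the hypothesis that $(\gamma_n)$ spans $\ell^1$ — not merely weak-null — that must be leveraged (via Rosenthal / the failure of weak-Cauchyness, together with the Schur property of $\mathcal{FS}_2$ recalled above) to rule out the truly bad cases or to reduce them to a separated model on which the bump-function construction works. Getting the quantitative bounds on $\|g_k\|_L$ uniform in $k$ while simultaneously making the off-diagonal pairings summable is the technical heart; everything after that is the routine perturbation-and-projection machinery.
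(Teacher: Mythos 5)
Your overall strategy (disjointly supported, uniformly bounded Lipschitz functionals biorthogonal to a subsequence, extended by McShane--Whitney, assembled into a projection) is exactly the paper's, and your final boundedness argument for $P$ is a legitimate variant of the paper's Lemma~\ref{lemma:Projection}. But there is a genuine gap at the step you yourself flag as the technical heart: you cannot, in general, extract a subsequence for which the supports $\{x_{n_k},y_{n_k}\}$ are pairwise $\varepsilon$-separated. If $x_n\to x$ and $y_n\to y$ (possibly with $x=y$), no subsequence is separated, and such configurations do occur for sequences equivalent to the $\ell^1$-basis. Your proposed remedies --- ``translate the obstruction to a genuinely separated sequence'' or ``treat $\gamma_n$ as a molecule up to a controlled perturbation'' --- are not worked out, and the second is dubious: when $x_n,y_n\to\ell$ with $\|\gamma_n\|$ bounded below, the coefficients $a_n,b_n$ typically blow up, so the perturbation is not obviously controlled, and the naive bump functions all contain the limit point in their support, destroying disjointness.

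The idea you are missing is the one the paper supplies via Lemma~\ref{lemma:sev}: the lower $\ell^1$-estimate makes $(\gamma_n)$ $2$-separated, hence (after extraction) uniformly bounded away from the \emph{finite-dimensional} space $\mathrm{span}\{\delta(x),\delta(y)\}$ spanned by the Diracs at the accumulation points. Hahn--Banach then yields uniformly bounded $g_n\in\mathrm{Lip}_0(M)$ with $\langle g_n,\gamma_n\rangle=1$ \emph{and} $g_n(x)=g_n(y)=0$. Because $g_n$ vanishes at the limit points, one can set $f_n=g_n$ on $\{x_n,y_n,x,y\}$ and $f_n=0$ outside neighborhoods of $x_n,y_n$ whose radii shrink geometrically (e.g.\ $d(y,y_{n+1})\le\tfrac14 d(y,y_n)$); this makes the supports genuinely pairwise disjoint and keeps $\|f_n\|_L$ uniformly bounded, yielding \emph{exact} biorthogonality with no Neumann-series correction needed. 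Two smaller omissions: the paper first reduces to bounded $M$ (via the isomorphism of \cite{AA} preserving $\mathcal{FS}_2$), which matters because the Lipschitz bound in the separated case involves $\mathrm{rad}(M)$; and the case analysis must also cover the mixed case where one of the two sequences is separated and the other converges.
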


For the proof, we require the following lemmas:

\begin{lem}\label{lemma:Projection}
Let $(f_n)$ a bounded sequence in $\text{Lip}_0(M)$, with disjoint supports, that is: 
$$\exists C_1>0 \|f_n\|_L \leq C_1 , n \in \mathbb{N} \quad \text{ and } \quad \mathrm{supp}(f_n) \cap \mathrm{supp}(f_m)=\emptyset ~  \text{ for all } n \neq m.$$
Let $(\gamma_n)$ a bounded sequence of elements of $\mathcal{F}(M)$ which is  biorthogonal to $(f_n)$, i.e: 
$$\exists C_2>0, \|\gamma_n\| \leq C_2, n \in \mathbb{N} \quad \text{ and } \quad \langle f_n,\gamma_m\rangle=\delta_{n,m}=\begin{cases}
    1 \text{ if } n=m\\
    0 \text{ if } n \neq m
\end{cases}.$$
Then there exists a continuous projection from $\mathcal{F}(M)$ onto $\overline{\mathrm{span}}\{\gamma_n, n \in \mathbb{N}\}$.
\end{lem}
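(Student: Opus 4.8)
The plan is to write the projection down explicitly: for $\gamma\in\mathcal{F}(M)$ set
\[
P\gamma:=\sum_{n}\langle f_n,\gamma\rangle\,\gamma_n ,
\]
so that the whole problem reduces to showing that this series converges absolutely and that $P$ is bounded; granting that, $P$ will automatically turn out to be the required projection.

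First I would prove the key estimate: \emph{for every finite set $F\subset\mathbb{N}$ and every family of scalars $(\theta_n)_{n\in F}$ with $|\theta_n|=1$, the function $g_F:=\sum_{n\in F}\theta_n f_n$ satisfies $\|g_F\|_L\le 2C_1$.} Since each $f_n$ is continuous, it vanishes at every point that does not belong to $\supp(f_n)$; hence, given $x\neq y$ in $M$, the disjointness of the supports forces there to be at most one index $n\in F$ with $f_n(x)\neq 0$ and at most one with $f_n(y)\neq 0$. If these two indices coincide (or if only one of the corresponding terms is present), then $g_F(x)-g_F(y)$ collapses to a single term $\theta_n\bigl(f_n(x)-f_n(y)\bigr)$, of modulus at most $C_1 d(x,y)$. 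Otherwise the two indices are distinct, and since $f_n$ vanishes at any point outside $\supp(f_n)$ one may rewrite each surviving term in the form $\theta_n\bigl(f_n(x)-f_n(y)\bigr)$, which gives $|g_F(x)-g_F(y)|\le 2C_1 d(x,y)$. In all cases $|g_F(x)-g_F(y)|\le 2C_1 d(x,y)$, and as $g_F(0_M)=0$ we conclude $g_F\in\mathrm{Lip}_0(M)$ with $\|g_F\|_L\le 2C_1$.

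Next I would deduce that $(\langle f_n,\gamma\rangle)_n\in\ell^1$ for every $\gamma\in\mathcal{F}(M)$, with $\sum_n|\langle f_n,\gamma\rangle|\le 2C_1\|\gamma\|$. Indeed, for a finite $F$ choose unimodular $\theta_n$ with $\theta_n\langle f_n,\gamma\rangle=|\langle f_n,\gamma\rangle|$; then
\[
\sum_{n\in F}|\langle f_n,\gamma\rangle|=\langle g_F,\gamma\rangle\le\|g_F\|_L\,\|\gamma\|\le 2C_1\|\gamma\|,
\]
and taking the supremum over all finite $F$ yields the claim. Consequently the series defining $P\gamma$ converges absolutely in $\mathcal{F}(M)$ (using $\|\gamma_n\|\le C_2$), $P\gamma\in\overline{\mathrm{span}}(\gamma_n,n\in\mathbb{N})$, and $\|P\gamma\|\le 2C_1C_2\|\gamma\|$, so $P\colon\mathcal{F}(M)\to\overline{\mathrm{span}}(\gamma_n,n\in\mathbb{N})$ is a well-defined bounded operator.

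Finally, biorthogonality gives $P\gamma_m=\sum_n\langle f_n,\gamma_m\rangle\gamma_n=\gamma_m$ for every $m$, so $P$ is the identity on $\mathrm{span}(\gamma_n,n\in\mathbb{N})$ and, by continuity, on its closure; together with $\rng(P)\subset\overline{\mathrm{span}}(\gamma_n,n\in\mathbb{N})$ this forces $P^2=P$, so $P$ is a bounded projection of $\mathcal{F}(M)$ onto $\overline{\mathrm{span}}(\gamma_n,n\in\mathbb{N})$. The only genuine obstacle is the uniform estimate $\|g_F\|_L\le 2C_1$, that is, extracting mileage from the disjointness of the supports for \emph{Lipschitz} functions (where "support" must be handled through continuity rather than purely set-theoretically); everything after that is routine bookkeeping.
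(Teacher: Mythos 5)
Your proof is correct, and it reaches the same operator as the paper, namely $P(\delta(x))=\sum_n f_n(x)\gamma_n$, but by a different mechanism. The paper defines the Lipschitz map $\varphi\colon x\in M\mapsto\sum_n f_n(x)\gamma_n$, observes that disjointness of supports leaves at most two nonzero terms in $\varphi(x)-\varphi(y)$ (giving the bound $2C_1C_2\,d(x,y)$), and then invokes the universal extension property of $\mathcal{F}(M)$ to obtain boundedness of $P=\overline{\varphi}$ for free. You instead define $P$ directly on all of $\mathcal{F}(M)$ and must therefore justify convergence of $\sum_n\langle f_n,\gamma\rangle\gamma_n$ for an arbitrary $\gamma$; your unimodular-sum estimate $\bigl\|\sum_{n\in F}\theta_n f_n\bigr\|_L\le 2C_1$ is the dual incarnation of the same ``at most two surviving terms'' observation, and it yields the genuinely stronger intermediate fact that $\sum_n|\langle f_n,\gamma\rangle|\le 2C_1\|\gamma\|$ for every $\gamma\in\mathcal{F}(M)$, i.e.\ the coefficient functionals are absolutely summable on the whole space, not just on Dirac masses. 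The price is a bit more bookkeeping; the benefit is that your argument does not use anything specific to free spaces beyond $\mathcal{F}(M)^*=\mathrm{Lip}_0(M)$, so it would transfer verbatim to any Banach space with a biorthogonal system whose functionals admit such a uniform bound on signed finite sums. Both arguments rely on the same convention that a function vanishes off its support (support taken as the closure of the non-vanishing set), which is also how the paper uses it. The verification that $P\gamma_m=\gamma_m$ and $P^2=P$ is identical in substance in both proofs.
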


\begin{proof}
    Let us consider the map 
    $$\varphi: x \in M \longmapsto \displaystyle\sum_{n=1}^{+\infty} f_n(x)\gamma_n \in \overline{\mathrm{span}}\{\gamma_n, n \in \mathbb{N}\}.$$
    Note that, since the $f_n$'s have disjoint supports, this sum is well defined. Indeed, if $x \in M$, then either $x$ is in the support of one $f_{n_0}$ (and only one), and then the sum is just $\varphi(x)=f_{n_0}(x)\gamma_{n_0}$, or $x$ is not in the support of any of the $f_n$, and then $\varphi(x)=0$.
    Let $x,y \in M$. Then:
$$\|\varphi(x)-\varphi(y)\| \leq \sum_{n=1}^{+\infty} |f_n(x)-f_n(y)|\|\gamma_n\|.$$
Note that, since the $f_n$ have disjoint supports, this sum has at most two nonzero terms.
So: 
$$\|\varphi(x)-\varphi(y)\| \leq 2C_1C_2d(x,y).$$
Hence $\varphi \in \text{Lip}_0(M)$. We consider $P:=\overline{\varphi}$ obtained by the universal extension property of Lipschitz-free spaces (see Section~\ref{intro}). That is, $P: \F(M) \to \overline{\mathrm{span}}\{\gamma_n, n \in \mathbb{N}\}$ is such that: 
$$
P ( \delta(x) ) = \sum\limits_{n=1}^{+\infty} \langle f_n,\delta(x)\rangle\gamma_n.$$
The universal extension property ensures that $P$ is a bounded operator. Moreover, for $n \in \mathbb{N}$, 
$$P(\gamma_n)=\sum_{m=1}^{+\infty} \langle f_m,\gamma_n\rangle\gamma_m.$$
Since $\langle f_m,\gamma_n\rangle=\delta_{n,m}$, it follows that $P(\gamma_n)=\gamma_n$.

Finally, 
$$P \circ P(\delta(x))=\sum_{n=1}^{+\infty} \left\langle f_n,\sum_{m=1}^{+\infty} \langle f_m,\delta(x)\rangle\gamma_m\right\rangle\gamma_n=\sum_{n=1}^{+\infty} \sum_{m=1}^{+\infty} \langle f_m,\delta(x)\rangle\langle f_n,\gamma_m\rangle\gamma_n.$$
Again, since $\langle f_n,\gamma_m\rangle=\delta_{n,m}$, we have: 
$$P \circ P(\delta(x))=\sum_{n=1}^{+\infty} \langle f_n,\delta(x))\rangle\gamma_n=P(\delta(x)).$$
\end{proof}{}

\begin{lem}\label{lemma:sev}
Let $X$ a Banach space and $(x_n) \subset X$ a bounded 2-separated sequence. Let $F$ be a finite dimensional subspace of $X$. Then:
$$\forall \delta>0, \exists N \in \mathbb{N}, \forall n \geq N, d(x_n,F) >1-\delta.$$
\end{lem}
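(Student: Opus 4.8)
The plan is to argue by contradiction via a compactness argument on the finite-dimensional unit ball. Suppose the conclusion fails; then there exist $\delta>0$ and a subsequence $(x_{n_k})$ such that $d(x_{n_k},F)\le 1-\delta$ for all $k$. For each $k$ pick $z_k\in F$ with $\|x_{n_k}-z_k\|\le 1-\delta/2$ (say). Since $(x_n)$ is bounded, the $z_k$ are bounded too, so they live in a bounded subset of the finite-dimensional space $F$; by compactness, after passing to a further subsequence we may assume $z_k\to z\in F$. Then for large $k$ we get $\|x_{n_k}-z\|\le 1-\delta/4$, so all $x_{n_k}$ (for $k$ large) lie in the closed ball $\overline{B}(z,1-\delta/4)$. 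But any two of them satisfy $\|x_{n_k}-x_{n_j}\|\ge 2$ by the $2$-separation hypothesis, whereas two points of a ball of radius $1-\delta/4$ are at distance at most $2(1-\delta/4)<2$ — contradiction as soon as we have at least two such indices.

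Concretely, I would write it as: fix $\delta\in(0,1)$; suppose for contradiction that there is an increasing sequence $(n_k)$ with $d(x_{n_k},F)\le 1-\delta$. Choose $z_k\in F$ with $\|x_{n_k}-z_k\|\le 1-\delta/2$. The sequence $(z_k)$ is bounded (by $\sup_n\|x_n\|+1$), hence admits a convergent subsequence $z_{k_j}\to z\in F$ by local compactness of $F$. For $j$ large, $\|z_{k_j}-z\|\le\delta/4$, hence $\|x_{n_{k_j}}-z\|\le 1-\delta/4$. Taking two distinct such indices $j_1\ne j_2$, the $2$-separation gives
\[
2\le \|x_{n_{k_{j_1}}}-x_{n_{k_{j_2}}}\|\le \|x_{n_{k_{j_1}}}-z\|+\|z-x_{n_{k_{j_2}}}\|\le 2\left(1-\tfrac{\delta}{4}\right)<2,
\]
a contradiction. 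Hence for every $\delta>0$ there is $N$ with $d(x_n,F)>1-\delta$ for all $n\ge N$.

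There is essentially no hard step here; the only thing to be careful about is the bookkeeping of constants so that the final inequality is strict, and invoking local compactness of the finite-dimensional space $F$ (equivalently, that bounded sequences in $F$ have convergent subsequences) rather than of the ambient $X$. I expect the main (very mild) obstacle to be purely notational: keeping track of the nested subsequences. If one prefers to avoid passing to subsequences twice, one can instead cover the bounded set $\{z\in F:\|z\|\le R\}$ by finitely many balls of radius $\delta/4$ and use a pigeonhole argument, but the subsequence version is cleaner to write.
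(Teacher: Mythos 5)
Your argument is correct and follows essentially the same route as the paper: assume the conclusion fails, use boundedness of $(x_n)$ plus compactness of bounded sets in the finite-dimensional subspace $F$ to extract a convergent sequence of (approximate) nearest points, and then contradict the $2$-separation via the triangle inequality. The only cosmetic difference is that the paper takes exact nearest points on the compact set $F\cap 3B$ and compares $y_k$ with $y_m$ directly, while you take $(1-\delta/2)$-approximate nearest points and pass through their limit $z$; both yield the same strict contradiction.
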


\begin{proof}
Let $\delta>0$. Since $(x_n)$ is bounded, one has: 
$$\exists C>0, \forall n \in \mathbb{N}, x_n \in B(0,C).$$
Let us denote $B:=B(0,C)$. Then it is easy to see that $d(x_n,F)=d(x_n,F \cap 3B)$ for every $n$.
Let us assume that for all $N \in \mathbb{N}$, there exists $n \geq N$ such that 
$$d(x_n,F) \leq 1-\delta.$$
Then, we can construct by induction a subsequence $(x_{n_k})$ such that for all $k \in \mathbb{N}, d(x_{n_k},F \cap 3B) \leq 1-\delta$. In addition, since $F \cap 3B$ is compact, 
$$\forall k \in \mathbb{N}, \exists y_k \in F \cap 3B, d(x_{n_k},F \cap 3B)=\|x_{n_k}-y_k\|.$$
By compactness, up to passing to a subsequence, we may assume that $y_k \longrightarrow y$ for some $y \in F \cap 3B$. Thus, for $k,m$ large enough, $\|y_k-y_m\| \leq \delta$. Hence
$$\|x_{n_k}-x_{n_m}\| \leq \|x_{n_k}-y_k\|+\|y_k-y_m\|+\|y_m-x_{n_m}\| \leq 1-\delta+\delta+1-\delta=2-\delta,$$
which contradicts the fact that $(x_n)$ is 2-separated.   
\end{proof}

We now have all the tools needed to complete the proof of the proposition, and hence of the theorem.

\begin{proof}[Proof of Proposition~\ref{prop:cl1}]
First and foremost, note that we may assume that $M$ is bounded. Indeed, if $M$ is an unbounded metric space, then, by \cite{AA}, there exists a bounded metric space $B(M)$ and an isomorphism 
\[
T: \mathcal{F}(M) \longrightarrow \mathcal{F}(B(M)),
\]
with the additional important property that $T(\mathcal{FS}_2(M)) = \mathcal{FS}_2(B(M))$. Consequently, if a projection $P_1$ onto $\mathcal{F}(B(M))$ exists as in the lemma, then the operator 
\[
P_2 = T^{-1} \circ P_1 \circ T
\]
provides the desired projection on $\mathcal{F}(M)$.

So let $M$ be a bounded metric space.
Let $(\gamma_n) \subset \mathcal{F}(M)$ be a sequence equivalent to the canonical basis of $\ell^1$, 
where each element is of the form
\[ \gamma_n = a_n \delta(x_n) + b_n \delta(y_n) \in \mathcal{FS}_2(M), \text{ for all } n \in \mathbb{N}.
\]
By hypothesis, $(\gamma_n)$ is equivalent to the canonical basis of $\ell^1$ so, up to multiplying by a positive constant, we may assume that: 
\[
(\ast) \quad \exists C>0, \ \forall a_1,\ldots,a_n \in \mathbb{C}, \ 
\sum_{i=1}^{n} |a_i| \leq \left\|\sum_{i=1}^{n} a_i\gamma_i\right\| \leq C\sum_{i=1}^{n} |a_i|, \text{ for all } n \in \mathbb{N}.
\]
In particular, taking $n=2$, $a_1=1$, $a_2=-1$ in the previous inequality, we obtain that $(\gamma_n)$ is 2-separated:
\[
\forall i \neq j, \ \|\gamma_i-\gamma_j\| \geq 2.
\]
For simplicity, we will use the same notation $(\gamma_n)$, $(x_n)$, and $(y_n)$ throughout the proof, even when we pass to subsequences.

\medskip

Up to passing to a subsequence, it is enough to consider the following four cases:
\begin{enumerate}
    \item[(1)] $(x_n)$ and $(y_n)$ are $\varepsilon$-separated for some $\varepsilon>0$;
    \item[(2)] $(x_n)$ is $\varepsilon$-separated for some $\varepsilon>0$, and $(y_n)$ converges to some $y \in M$;
    \item[(3)] $(x_n)$ converges to some $x \in M$ and $(y_n)$ converges to some $y \in M$ with $x \neq y$;
    \item[(4)] $(x_n)$ and $(y_n)$ converge to some element $\ell \in M$.
\end{enumerate}
\medskip

Case (1): There exists $\varepsilon>0$ such that $(x_n)$ and $(y_n)$ are $\varepsilon$-separated.
\smallskip

Up to passing to a subsequence, we may assume that for all $n \neq m$,\\ 
$x_n \notin B(x_m,\frac{\varepsilon}{2}) \cup B(y_m, \frac{\varepsilon}{2})$ and 
$y_n \notin B(x_m,\frac{\varepsilon}{2}) \cup B(y_m, \frac{\varepsilon}{2})$.

Let $B_n=\{x_n,y_n\}$. 
Clearly, the $B_n$'s are pairwise disjoint. 
By the Hahn--Banach theorem, 
\[
\exists g_n \in \mathrm{Lip}_0(M), \|g_n\|_L=\|\gamma_n\| \leq \frac{1}{C}, \ \langle g_n,\gamma_n\rangle=1.
\]
Now, for $n \in \N$, let $f_n$ be defined by 
\[
f_n=\begin{cases}
    g_n & \text{on } B_n,\\[0.3em]
    0 & \text{on } M \setminus (B(x_n,\frac{\varepsilon}{4}) \cup B(y_n,\frac{\varepsilon}{4})).
\end{cases}
\]

Let us check that $f_n$ is Lipschitz on $S_n:=B_n \cup \bigl(M \setminus (B(x_n,\frac{\varepsilon}{4}) \cup B(y_n,\frac{\varepsilon}{4}))\bigr)$.
Let $n \in \N$ and $x,y \in S_n$.

If $x$ and $y$ are both in $B_n$, then the Lipschitz inequality holds because $g_n$ is Lipschitz. 
The same is true if $x$ and $y$ are both in $M \setminus (B(x_n,\frac{\varepsilon}{4}) \cup B(y_n,\frac{\varepsilon}{4}))$,
since $f_n(x)$ and $f_n(y)$ both vanish. 
Assume now that $x \in B_n$ and $y \in M \setminus (B(x_n,\frac{\varepsilon}{4}) \cup B(y_n,\frac{\varepsilon}{4}))$. 
Hence,
\[
|f_n(x)-f_n(y)| = |g_n(x)-g_n(0)| \leq \|g_n\|_L\, d(x,0).
\]
However, $\|g_n\|_L=\leq \frac 1 C $ and $\frac{\ep}{4} \leq d(x,y)$, so 
\[
\|g_n\|_L \leq \frac{4}{C\ep}d(x,y).
\]
Consequently, 
\[
|f_n(x)-f_n(y)| \leq \frac{4}{C\ep} d(x,0)d(x,y) \leq \frac{4}{C\ep}\mathrm{rad}(M)\, d(x,y).
\]
Hence $f_n$ is Lipschitz and $\|f_n\|_L \leq \frac{4}{C\varepsilon}\mathrm{rad}(M)$. 
We extend each function $f_n$ to a function $\widetilde{f_n}$ in $\mathrm{Lip}_0(M)$ using the McShane--Whitney extension theorem (Lemma \ref{lemma:McShane}). 
As a consequence, $(\widetilde{f_n})$ is a bounded sequence in $\mathrm{Lip}_0(M)$, with disjoint support, because the $B_n$ are pairwise disjoint. Furthermore, $(\gamma_n)$ is bounded because it is equivalent to the canonical basis of $\ell^1$, and it is biorthogonal to $(\widetilde{f_n})$. Indeed, for $n \neq m \in \mathbb{N}$, since $x_m,y_m \notin B(x_n,\frac{\varepsilon}{4}) \cup B(y_n,\frac{\varepsilon}{4})$, 
$$\langle \widetilde{f_n},\gamma_m \rangle=a_m\widetilde{f_n}(x_m)+b_n\widetilde{f_n}(y_m)=0+0=0.$$ Moreover, $\langle \widetilde{f_n},\gamma_n\rangle=\langle g_n,\gamma_n\rangle=1$.
Hence, thanks to Lemma~\ref{lemma:Projection}, we obtain a projection from $\mathcal{F}(M)$ onto $\overline{\mathrm{span}}\{\gamma_n: n \in \mathbb{N}\}$.
\medskip

Case (2): There exists $\varepsilon>0$ such that $(x_n)$ is $\varepsilon$-separated, and $(y_n)$ converges to some $y \in M$.

First, since $(\gamma_n)$ is 2-separated, Lemma \ref{lemma:sev} with $\delta=\frac{1}{2}$ ensures that, up to a further extraction:  
\[
\forall i \in \mathbb{N}, \ d(\gamma_i,\mathrm{span}\{\delta(y)\})> \frac 1 2.
\]

Up to passing to a subsequence, we may assume that
\[
\forall n \in \mathbb{N}, \quad \ y_n \in B(y,\varepsilon) \quad\text{and}\quad x_n \notin B\left(y,\frac{\ep}{2}\right).
\]
Up to a further extraction, we may assume that:
\[\ d(y,y_{n+1}) \leq \tfrac{1}{4}d(y,y_n), \text{ for all } n \in \mathbb{N}.
\]
Let us consider the sets 
\[
A_n=\left\{z \in M ~  | ~  d(z,y_n) \leq \tfrac{1}{4}d(y,y_n)\right\},
\]
and let us check that the $A_n$'s are disjoint.
Let $z \in A_n \cap A_m$ with $n<m$. Then
\[
d(y,y_n) \leq d(y_n,z)+d(z,y) \leq \tfrac{1}{4}d(y,y_n)+d(z,y_m)+d(y,y_m).
\]
Thus
\[\tfrac{3}{4}d(y,y_n) \leq \tfrac{5}{4}d(y,y_m).
\]
Hence
\[
4^{m-n}d(y,y_m) \leq d(y,y_n) \leq \tfrac{5}{3}d(y,y_m),
\]
which is absurd. Thus $A_n \cap A_m =\emptyset$.\\
By the Hahn--Banach theorem,
\[
\forall n \in \mathbb{N}, \ \exists g_n \in \mathrm{Lip}_0(M), \ \|g_n\|_L=2, \ 
\begin{cases}
g_n(y)=0,\\
g_n(\gamma_n)=1.
\end{cases}
\]

We now consider, for all $n \in \mathbb{N}$,
\[f_n=\begin{cases}
    g_n & \text{on } \{x_n,y_n,y\},\\[0.3em]
    0 & \text{on } M \setminus (B(x_n,\tfrac{\varepsilon}{4}) \cup A_n).
\end{cases}
\]
The $f_n$'s have pairwise disjoint supports.
Let $n \in \mathbb{N}$. We now check that $f_n$ is Lipschitz. 
Let $x,z$ be in the set where $f_n$ is defined. For the same reasons as in case (1), the only nontrivial verification is when 
$x \in M \setminus (B(x_n,\tfrac{\varepsilon}{4}) \cup A_n)$ and $z \in \{x_n,y_n,y\}$. 
\begin{enumerate}
    \item[-] If $z=x_n$, we proceed exactly as in case (1) (since $z \notin B(x_n,\tfrac{\varepsilon}{4})$) and find that $\|f_n\|_L \leq \tfrac{8}{\varepsilon}\mathrm{rad}(M)$
    \item[-] If $z=y_n$, then 
    \[
    |f_n(x)-f_n(y_n)| = |g_n(y)-g_n(y_n)| \leq 2d(y,y_n) 
    \leq 2 d(x,y_n).
    \]
    \item[-] If $z=y$, then $|f_n(x)-f_n(y)|=0.$
\end{enumerate}
Thus, in all cases, $f_n$ is Lipschitz. 
We extend each $f_n$ to $\widetilde{f_n}$ thanks to the McShane--Whitney extension Lemma (Lemma \ref{lemma:McShane}). 
Then $(\widetilde{f_n})$ is bounded in $\mathrm{Lip}_0(M)$ and $(\gamma_n)$ is bounded in $\mathcal{F}(M)$. Let us check that $(\gamma_n)$ is biorthogonal to $(\widetilde{f_n})$. Let $n \neq m$ in $\mathbb{N}$, since $x_m, y_m \notin B(x_n,\frac{\varepsilon}{4}) \cup A_n$: 
$$\langle \widetilde{f_n},\gamma_m \rangle=a_m\widetilde{f_n}(x_m)+b_n\widetilde{f_n}(y_m)=0+0=0.$$ 
Moreover, $\langle \widetilde{f_n},\gamma_n\rangle=\langle g_n,\gamma_n\rangle=1$.
By Lemma~\ref{lemma:Projection}, we obtain the desired projection.
\medskip

Case (3): $(x_n)$ converges to
a certain $x \in M$ and $(y_n)$ converges to a certain $y \in M$ with $x \neq y$.

Since $x \neq y$, we have $\varepsilon:=d(x,y)>0$. 
Passing to a subsequence, we may assume that
\[
\forall n \in \mathbb{N}, \; x_n \in B\left(x,\frac {\varepsilon}{4}\right) \text{ and } y_n \in B\left(y,\frac{\varepsilon}{4}\right).
\]
Let us consider the sets as in the previous case: 
\[
A_n=\left\{z \in M ~|~ d(z,x_n) \leq \tfrac{1}{4}d(x,x_n)\right\}, \quad 
B_n=\left\{z \in M ~|~ d(z,y_n) \leq \tfrac{1}{4}d(y,y_n)\right\}.
\]
Up to an extraction, we may assume that: 
\[
\ A_n \subset B\left(x,\tfrac{\varepsilon}{4}\right) \text{ and } B_n \subset B\left(y,\tfrac{\varepsilon}{4}\right), \text{ for all } n \in \mathbb{N}.
\]
The $A_n$'s are pairwise disjoint, the $B_n$'s are pairwise disjoint, and since the terms of $(x_n)$ and $(y_n)$ are well separated, we also have 
\[
\forall n \neq m, \ A_n \cap B_m = \emptyset.
\]
Furthermore, since $(\gamma_i)$ is 2-separated, by Lemma \ref{lemma:sev} we may assume that:
\[
\ 
d(\gamma_i,\mathrm{span}\{\delta(x),\delta(y)\})>\frac 12, \text{ for all } i \in \mathbb{N}.
\]
By the Hahn--Banach theorem, 
\[
\forall n \in \mathbb{N}, \ \exists g_n \in \mathrm{Lip}_0(M), \ \|g_n\|_L=2, \ 
\begin{cases}
    g_n(x)=g_n(y)=0,\\
    g_n(\gamma_n)=1.
\end{cases}
\]
We now consider, for all $n \in \mathbb{N}$,
\[
f_n=\begin{cases}
    g_n & \text{on } \{x_n,y_n,x,y\},\\[0.3em]
    0 & \text{on } M \setminus (A_n \cup B_n).
\end{cases}
\]
The function $f_n$ is Lipschitz. Indeed, let $z,w$ be in the set where $f_n$ is defined.
Assume that $z \in \{x_n\} \cup \{y_n\} \cup \{x\} \cup \{y\}$ and $w \in M\setminus (A_n \cup B_n)$ (the other cases are clear).
\begin{enumerate}
    \item[-] If $z=x_n$, then 
    \[
    |f_n(x_n)-f_n(w)| = |g_n(x_n)-g_n(x)| \leq 2 d(x_n,x) \leq  8d(x_n,w).
    \]
    \item[-] If $z=y_n$, a similar estimate holds: 
    \[
    |f_n(y_n)-f_n(w)| \leq 8d(y_n,w).
    \]
    \item[-] If $z=x$, then 
    \[
    |f_n(x)-f_n(w)| = |g_n(x)-f_n(w)| = 0.
    \]
    \item[-] If $z=y$, then 
    \[
    |f_n(y)-f_n(w)| = 0.
    \]
\end{enumerate}
 We extend each $f_n$ to $\widetilde{f_n} \in \mathrm{Lip}_0(M)$ using the McShane--Whitney extension theorem (Lemma \ref{lemma:McShane}). By construction, the supports of the $\widetilde{f_n}$'s are pairwise disjoint. The sequences $(\widetilde{f_n})$ and $(\gamma_n)$ are bounded, and they are biorthogonal for the same reasons as in the case 2. The desired projection follows from Lemma~\ref{lemma:Projection}.
\medskip

Case (4): $(x_n)$ and $(y_n)$ converge to the same element $\ell \in M$.

Up to passing to a subsequence, we may assume that, for every $n \in \mathbb{N}$:
\[ \ d(x_{n+1},\ell) \leq \tfrac{1}{4}\min(d(x_n,\ell),d(y_n,\ell)) 
\text{ and } d(y_{n+1},\ell)\leq \tfrac{1}{4}\min(d(x_n,\ell),d(y_n,\ell)).
\]
As in the previous cases, let us consider the sets
\[
A_n=\left\{z \in M ~|~ d(z,x_n) \leq \tfrac{1}{4}d(x_n,\ell)\right\}, 
\quad 
B_n=\left\{z \in M ~|~ d(z,y_n) \leq \tfrac{1}{4}d(y_n,\ell)\right\}.
\]
We put $C_n=A_n \cup B_n$. 
We now show that the $C_n$ are pairwise disjoint. Since the $A_n$ and $B_n$ are already pairwise disjoint, it remains to prove that
\[
\ A_n \cap B_m = \emptyset, \text{ for all } n \neq m.
\]
Let $n,m \in \mathbb{N}$ with $n<m$, and let $z \in A_n \cap B_m$. 
If $d(x_n,\ell) \leq d(y_n,\ell)$, then
\[
d(x_n,\ell) \leq d(x_n,z)+d(z,\ell) \leq \tfrac{1}{4}d(x_n,\ell)+d(z,y_m)+d(y_m,\ell),
\]
so
\[
\tfrac{3}{4}d(x_n,\ell) \leq \tfrac{5}{4}d(y_m,\ell),
\]
hence
\[
4^{m-n}d(y_m,\ell) \leq d(x_n,\ell) \leq \tfrac{5}{3}d(y_m,\ell),
\]
which is absurd.

If $d(y_n,\ell) \leq d(x_n,\ell)$, a similar argument yields
\[
4^{m-n}d(x_m,\ell) \leq d(y_n,\ell) \leq d(x_n,\ell) \leq \tfrac{5}{3}d(x_m,\ell),
\]
which is also absurd. 
We have thus shown that $A_n$ and $B_m$ are disjoint for $n \neq m$. 

We now proceed similarly as in the other cases: since $(\gamma_n)$ is 2-separated, we may assume that:
\[
\forall n \in \mathbb{N}, \ 
d(\gamma_n,\mathrm{span}\{\delta(\ell)\})>\frac 12.
\]
By the Hahn--Banach theorem,
\[
\forall n \in \mathbb{N}, \ \exists g_n \in \mathrm{Lip}_0(M), \ \|g_n\|_L=2, \ 
\begin{cases}
    g_n(\ell)=0,\\
    g_n(\gamma_n)=1.
\end{cases}
\]
Let, for all $n \in \mathbb{N}$,
\[
f_n=\begin{cases}
    g_n & \text{on } \{x_n,y_n,\ell\},\\[0.3em]
    0 & \text{on } M \setminus C_n.
\end{cases}
\]
By construction, the $f_n$'s have disjoint supports. Let us check that each $f_n$ is Lipschitz. As before, it suffices to consider $x \in M \setminus C_n$ and $y \in \{x_n,y_n,\ell\}$.
If $y=x_n$, then
\[
|f_n(x)-f_n(x_n)| = |g_n(\ell)-g_n(x_n)| \leq 2d(x_n,\ell) 
\leq 8 d(x,x_n).
\]
Exactly the same way, if $y=y_n$, it follows that
\[
|f_n(x)-f_n(y_n)| \leq 8d(x,y_n).
\]
Hence $f_n$ is Lipschitz and biorthogonal to $(\gamma_n)$. 
We extend it to $\widetilde{f_n}$ using the McShane--Whitney extension theorem (Lemma \ref{lemma:McShane}), 
and we check that the assumptions of Lemma~\ref{lemma:Projection} are satisfied, which yields the desired projection.
\end{proof}

Let us now state the corollary about the adjoint $\omega C_f$: 

\begin{cor}{} 
Let $M,N$ be infinite metric spaces, $f : M \longrightarrow N$ and $\omega : M \longrightarrow \mathbb{C}$ such that $\fonction{\omega C_f}{\mathrm{Lip}_0(N)}{\mathrm{Lip}_0(M)}{g}{x \longmapsto \omega(x)g(f(x))}$ is a bounded operator.

The following properties are equivalent:
\begin{enumerate}
    \item[(i)] $\omega C_f$ is compact;
    \item[(ii)] $\omega C_f$ is weakly compact;
    \item[(iii)] $\omega C_f$ is super strictly singular;
    \item[(iv)] $\omega C_f$ is super strictly cosingular;
    \item[(v)] $\omega C_f$ is strictly singular;
    \item[(vi)] $\omega C_f$ is strictly cosingular;
    \item[(vii)] $\omega C_f$ does not fix any copy of $\ell^{\infty}$;
    \item[(viii)] $\omega C_f$ does not fix any complemented copy of $\ell^{\infty}$.
\end{enumerate}

\end{cor}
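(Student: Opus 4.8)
The plan is to derive the Corollary from Theorem~\ref{thm:main} by duality. Throughout, write $T := \omega\widehat f : \mathcal F(M) \to \mathcal F(N)$, so that, as recalled above, $\omega C_f = T^* : \mathrm{Lip}_0(N) \to \mathrm{Lip}_0(M)$; recall also that Theorem~\ref{thm:main} collapses all of (i)--(viii) for $T$ into the single condition ``$T$ is compact.''

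First I would record the routine transfers. By Schauder's theorem $T^*$ is compact iff $T$ is, and by Gantmacher's theorem $T^*$ is weakly compact iff $T$ is; this handles (i) and (ii) for $\omega C_f$. By Remark~\ref{SSCScomp}, $T^*$ is super strictly singular (resp.\ super strictly cosingular) if and only if $T$ is super strictly cosingular (resp.\ super strictly singular), which handles (iii) and (iv). For (v): if $\omega C_f = T^*$ is strictly singular, then $T$ is strictly cosingular by Proposition~\ref{propSS}(a), hence compact by Theorem~\ref{thm:main}, hence $\omega C_f$ is compact; conversely compactness implies strict singularity (Remark~\ref{SCScomp}). Case (vi) is the mirror image: $T^*$ strictly cosingular forces $T$ strictly singular by Proposition~\ref{propSS}(a), hence $T$ compact. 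Thus (i)--(vi) for $\omega C_f$ are pairwise equivalent, each amounting to ``$T$ is compact.''

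It then remains to bring in the $\ell^\infty$-versions (vii)$'$ and (viii)$'$. The implications (i) $\Rightarrow$ (vii)$'$ $\Rightarrow$ (viii)$'$ cost nothing: a compact operator is never an isomorphism on an infinite-dimensional subspace, so it fixes no copy of $\ell^\infty$, and fixing a complemented copy is a fortiori fixing a copy. The substantive point is (viii)$'$ $\Rightarrow$ (i), which I would prove contrapositively. If $\omega C_f$ is not compact, then $T$ is not compact, so by Theorem~\ref{thm:main} there exist subspaces $E \subset \mathcal F(M)$ and $F \subset \mathcal F(N)$ with $E \cong F \cong \ell^1$, bounded projections $P$ onto $E$ and $Q$ onto $F$, and an isomorphism $u := \restriction{T}{E} : E \to F$. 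Writing $j_E : E \hookrightarrow \mathcal F(M)$, $j_F : F \hookrightarrow \mathcal F(N)$ for the inclusions, one has $u = Q T j_E$, hence $u^* = j_E^*\, T^*\, Q^*$ is an isomorphism of $F^*$ onto $E^*$. Now $Q^*$ embeds $F^* \cong (\ell^1)^* = \ell^\infty$ as a complemented subspace of $\mathrm{Lip}_0(N)$ (complemented by $Q^* j_F^*$), and the factorization $u^* = j_E^* T^* Q^*$ forces $T^*$ to be bounded below on $Q^*(F^*)$ and its image $T^* Q^*(F^*) \cong \ell^\infty$ to be complemented in $\mathrm{Lip}_0(M)$ (an explicit projection being $T^* Q^* (u^*)^{-1} j_E^*$). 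Hence $\omega C_f = T^*$ maps the complemented copy $Q^*(F^*)$ of $\ell^\infty$ isomorphically onto the complemented copy $T^* Q^*(F^*)$, i.e.\ it fixes a complemented copy of $\ell^\infty$, contradicting (viii)$'$ and closing the cycle of equivalences.

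I expect the only step requiring genuine care to be this last duality passage --- the implication ``$T$ fixes a complemented copy of $\ell^1$'' $\Rightarrow$ ``$T^*$ fixes a complemented copy of $\ell^\infty$'' --- where one must verify both that $T^*$ remains bounded below on $Q^*(F^*)$ and that the natural idempotent assembled from $T^*$, $Q^*$, $j_E^*$ and $(u^*)^{-1}$ is a bounded projection onto $T^* Q^*(F^*)$; the identification $(\ell^1)^* = \ell^\infty$, stable under the isomorphism $E \cong \ell^1$, is what converts the statement about $T$ into the statement about $\omega C_f$. Everything else is bookkeeping with Schauder's theorem, Gantmacher's theorem, Proposition~\ref{propSS}, and Remarks~\ref{SCScomp} and \ref{SSCScomp}.
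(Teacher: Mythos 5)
Your proposal is correct and follows essentially the same route as the paper: transfer (i)--(vi) by Schauder, Gantmacher, Remark~\ref{SSCScomp} and Proposition~\ref{propSS}, then prove (viii)$'\Rightarrow$(i) by dualizing the complemented copy of $\ell^1$ fixed by $\omega\widehat f$. In fact your treatment of the last step is slightly more careful than the paper's, since you exhibit the explicit projection $T^*Q^*(u^*)^{-1}j_E^*$ onto the image $T^*Q^*(F^*)$ and verify that $T^*$ is bounded below there, whereas the paper only produces a projection onto $P_E^*(E^*)$ and asserts the isomorphism somewhat loosely.
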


\begin{proof}{}
According to Schauder's theorem (\cite{Megg}, Theorem 3.4.15), an operator is compact if and only if its adjoint is compact. Hence, $\omega\widehat{f}$ is compact if and only if $\omega C_f$ is compact.

In addition, Gantmacher's theorem (\cite{Megg}, Theorem 3.5.13) states that an operator is weakly compact if and only if its adjoint is weakly compact. Hence, $\omega\widehat{f}$ is weakly compact if and only if $\omega C_f$ is weakly compact.

Now, thanks to Theorem~\ref{thm:main}, we get the equivalence: 
$$\omega C_f \text{ is compact } \iff \omega C_f \text{ is weakly compact }.$$

So now, we have the same diagram of implications as Figure~\ref{diagram} for $\omega C_f$. It remains to show that:
$$\omega C_f \text{ doesn't fix any complemented copy of } \ell^{\infty} \Rightarrow  \omega C_f \text{ is compact. }$$
Let us reason by contraposition. If $\omega C_f$ is not compact, then $\omega\widehat{f}$ is not compact and by Theorem \ref{thm:main}, $\omega \widehat{f}$ does fix a complemented copy of $\ell^1$. There exist a complemented subspace $E$ of $\mathcal{F}(M)$ with $E \simeq \ell^1$, a complemented subspace $F$ of $\mathcal{F}(N)$ with $F \simeq \ell^1$, such that $\restriction{\omega\widehat{f}}{E} : E \longrightarrow F$ is an isomorphism. There exist two continuous projections $P_E : \mathcal{F}(M) \longrightarrow E$ and $P_F : \mathcal{F}(N) \longrightarrow F$. Since $P_E$ is surjective, the adjoint $P_E^{*} : E^* \longrightarrow \mathrm{Lip}_0(M)$ is an isomorphic embedding. Thus, we can see $E^*$ as a closed subspace of $\mathrm{Lip}_0(M)$: there exists $X$ a subspace of $\mathrm{Lip}_0(M)$ such that $E^* \simeq X$. Hence, $\ell^{\infty} \simeq E^* \simeq X$. The same reasoning with $P_F$ yields to the existence of a subspace $Y$ of $\mathrm{Lip}_0(N)$ such that $\ell^{\infty} \simeq F^* \simeq Y$. Consequently, the operator $\restriction{\omega C_f}{Y} : Y \longrightarrow X$ is an isomorphism since it is the adjoint of the isomorphism $\restriction{\omega\widehat{f}}{E}$. We showed that $\omega C_f$ fix a copy of $\ell^{\infty}$. To show that this copy is complemented, let us consider the embedding $i : E \hookrightarrow \mathcal{F}(M)$, and define $\fonction{G}{\mathrm{Lip}_0(M)}{X}{h}{P_E^*(i^*(h))}$. The linear map $G$ is continuous, let us show that it is a projection. For $h \in \mathrm{Lip}_0(M)$, $x \in M$, we have:
$$\langle G \circ G(h),\delta(x)\rangle=\langle P_E^* \circ i^* \circ P_E^* \circ i^*(h),\delta(x)\rangle=\langle g,i \circ P_E \circ i \circ P_E(\delta(x))\rangle.$$
Since $i$ is the canonical injection, and $P_E$ is a projection, it follows that: 
$$\langle G \circ G(h),\delta(x)\rangle=\langle h, i \circ P_E(\delta(x))\rangle=\langle P_E^* \circ i^*(h),\delta(x)\rangle=\langle G(h),\delta(x)\rangle.$$

\end{proof}

\section{Generalization to operators preserving finitely supported elements}
 
We now investigate whether this result can be extended to the case of a bounded operator $T$ between Lipschitz-free spaces that preserves finitely supported elements.

\begin{thm}\label{thm:secondmain}
Let $T: \mathcal{F}(M) \longrightarrow \mathcal{F}(N)$ be a bounded operator such that:
\[
\exists k \in \mathbb{N}, \quad T(\mathcal{FS}_2(M)) \subset \mathcal{FS}_k(N).
\]
Then the following properties are equivalent:
\begin{enumerate}
    \item[(i)] $T$ is compact;
    \item[(ii)] $T$ is weakly compact;
    \item[(iii)] $T$ is super strictly singular;
    \item[(iv)] $T$ is super strictly cosingular;
    \item[(v)] $T$ is strictly singular;
    \item[(vi)] $T$ is strictly cosingular;
    \item[(vii)] $T$ does not fix any copy of $\ell^1$;
    \item[(viii)] $T$ does not fix any complemented copy of $\ell^1$. 
\end{enumerate}
\end{thm}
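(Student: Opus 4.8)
The plan is to follow the architecture of the proof of Theorem~\ref{thm:main}, replacing $\mathcal{FS}_2$ by $\mathcal{FS}_k$ throughout and, most importantly, upgrading Proposition~\ref{prop:cl1} to finitely supported elements with up to $k$ points in their support. First I would observe that the whole scheme of implications in Figure~\ref{diagram} transfers word for word: $(i)\Rightarrow(iii)\Rightarrow(v)$ and $(i)\Rightarrow(iv)\Rightarrow(vi)$ by Remarks~\ref{SCScomp} and~\ref{SSCScomp} (these are general facts about operators between Banach spaces), $(i)\Rightarrow(ii)$ trivially, and $(vi)\Rightarrow(viii)$, $(vii)\Rightarrow(viii)$ trivially. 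For the converse $(ii)\Rightarrow(i)$ I would argue as in Proposition~\ref{prop:equivWeak}: since $B_{\F(M)}=\overline{\mathrm{aco}}\,\mathcal{M}$ (the closed absolutely convex hull; this is immediate from Hahn--Banach separation, as $\|g\|_L=\sup_{m\in\mathcal{M}}|\langle g,m\rangle|$ for $g\in\mathrm{Lip}_0(M)$), an operator on $\F(M)$ is (weakly) compact if and only if its image of $\mathcal{M}$ is relatively (weakly) compact; and since $T(\mathcal{M})\subset\mathcal{FS}_k(N)$, which is weakly closed and has the Schur property (Theorem~C of~\cite{ACP21}), an application of the Eberlein--\v{S}mulian theorem shows that relative weak compactness of $T(\mathcal{M})$ already forces relative norm compactness. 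Hence $(i)\iff(ii)$ and it only remains to prove $(viii)\Rightarrow(i)$.

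For that implication I would reason by contraposition. If $T$ is not compact, then $T(\mathcal{M})$ is not relatively compact, so there exist $x_n\neq y_n$ in $M$ such that $\gamma_n:=T(m_{x_n,y_n})\in\mathcal{FS}_k(N)$ has no norm-convergent subsequence; by the Schur property of $\mathcal{FS}_k(N)$ it has no weakly Cauchy subsequence, so Rosenthal's $\ell^1$ theorem (\cite{AK}) gives a subsequence equivalent to the canonical basis of $\ell^1$. Because $\|m_{x_n,y_n}\|=1$, the lower $\ell^1$-estimate on $(\gamma_n)$ forces $(m_{x_n,y_n})$ to be equivalent to the $\ell^1$ basis as well, and $T$ then restricts to an isomorphism between $E:=\overline{\mathrm{span}}(m_{x_n,y_n})$ and $F:=\overline{\mathrm{span}}(\gamma_n)$, both isomorphic to $\ell^1$. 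Since $m_{x_n,y_n}\in\mathcal{FS}_2(M)$, Proposition~\ref{prop:cl1} applies verbatim and, after passing to a subsequence, $E$ is complemented in $\F(M)$. So the entire theorem reduces to showing that $F$ is complemented in $\F(N)$, that is, to the following $\mathcal{FS}_k$-analogue of Proposition~\ref{prop:cl1}: \emph{if $(\gamma_n)\subset\mathcal{FS}_k(N)$ is equivalent to the canonical basis of $\ell^1$, then some subsequence is complemented in $\F(N)$.}

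To prove this analogue I would again aim to produce, after passing to a subsequence, a bounded sequence $(f_n)\subset\mathrm{Lip}_0(N)$ with pairwise disjoint supports and biorthogonal to $(\gamma_n)$, and then invoke Lemma~\ref{lemma:Projection}. After reducing to $N$ bounded (and, if necessary, complete) exactly as in the proof of Proposition~\ref{prop:cl1} --- the isomorphism of~\cite{AA} being a weighted bijection, it preserves $\mathcal{FS}_j(\cdot)$ for every $j$, not only $j=2$ --- one writes $\gamma_n=\sum_{i=1}^{k'}a_{n,i}\delta(z_{n,i})$ with a fixed support size $k'\le k$. A sequence of extractions then stabilizes the configuration of the support points: the limits $d(z_{n,i},z_{n,j})\to d_{ij}$ exist for all $i,j$, the classes of the relation ``$d_{ij}=0$'' partition $\{1,\dots,k'\}$ into clusters that are uniformly separated for large $n$, and each cluster is either of \emph{convergent type} (all its points tend to a common limit $\ell_s\in N$, with the various $\ell_s$ distinct and staying away from the other clusters) or of \emph{separated type} (its points remain within $o(1)$ of an $\varepsilon_s$-separated anchor sequence $(z_{n,j_s})_n$). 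A Ramsey-type extraction, exactly in the spirit of Case~(1) of Proposition~\ref{prop:cl1}, further guarantees that all separated-type anchors, over all $n$ and all such clusters, form a single uniformly $\varepsilon$-separated family, and that the annuli shrinking onto the various $\ell_s$ are eventually disjoint from one another and from those anchor balls. Using Lemma~\ref{lemma:sev} one arranges $d(\gamma_n,\mathrm{span}\{\delta(\ell_s):s\text{ of convergent type}\})>\tfrac12$, whence Hahn--Banach yields $g_n\in\mathrm{Lip}_0(N)$ with $\|g_n\|_L\le 2$, $g_n(\ell_s)=0$ for every convergent-type cluster, and $\langle g_n,\gamma_n\rangle=1$. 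One then sets $f_n$ to agree with $g_n$ on $\{z_{n,i}\}_i\cup\{\ell_s\}_s$ and to vanish off the union of the fixed-radius balls $B(z_{n,j_s},\varepsilon/4)$ (separated-type clusters) and the nested shrinking annuli around the $\ell_s$ (convergent-type clusters), and checks, cluster by cluster, the Lipschitz estimates of Cases~(1)--(4) of Proposition~\ref{prop:cl1}; they combine into a uniform bound $\|f_n\|_L\lesssim\mathrm{rad}(N)/\varepsilon$. After extending via the McShane--Whitney Lemma~\ref{lemma:McShane}, disjointness of supports and the biorthogonality $\langle f_n,\gamma_m\rangle=\delta_{n,m}$ follow from the separation arrangements, and Lemma~\ref{lemma:Projection} produces the projection onto $F$.

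I expect the main obstacle to be exactly this combinatorial bookkeeping in the $\mathcal{FS}_k$-analogue of Proposition~\ref{prop:cl1}: organising an arbitrary finite configuration of support points into a stable pattern of mutually separated clusters, which requires Ramsey-type arguments to control the cross-distances $d(z_{n,i},z_{m,j})$ for $n\neq m$ between points of different clusters (these are not governed by the individual separation of each anchor sequence), and then verifying that the disjointly supported Lipschitz functions assembled cluster by cluster are simultaneously \emph{uniformly} bounded and biorthogonal. A minor secondary point to check carefully is that the isomorphism of~\cite{AA} used in the reduction to bounded spaces respects $\mathcal{FS}_j(\cdot)$ for all $j$, and not merely for $j=2$ as recorded in the proof of Proposition~\ref{prop:cl1}.
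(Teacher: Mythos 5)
Your proposal is correct and follows essentially the same route as the paper: reduce everything to $(viii)\Rightarrow(i)$, use the Schur property of $\mathcal{FS}_k(N)$ and Rosenthal's theorem to extract $(\gamma_n)=(T(m_{x_n,y_n}))$ equivalent to the $\ell^1$ basis, and prove an $\mathcal{FS}_k$-analogue of Proposition~\ref{prop:cl1} by assembling disjointly supported biorthogonal Lipschitz functions via Hahn--Banach, McShane--Whitney and Lemma~\ref{lemma:Projection}. The only organizational difference is that you first cluster the support points of each $\gamma_n$ by their asymptotic mutual distances and then classify clusters as convergent or separated, whereas the paper directly re-indexes and classifies each of the $k$ coordinate sequences $(x_i^n)_n$ as $\varepsilon$-separated or convergent; your sketch also spells out a few steps the paper leaves implicit (the transfer of $(i)\Leftrightarrow(ii)$ to general $T$, the complementation of the domain copy via Proposition~\ref{prop:cl1}, and the Ramsey-type control of cross-distances), all of which are sound.
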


Although this theorem is strictly stronger than Theorem~\ref{thm:main}, so that it would in principle suffice to prove the former and omit a proof of the latter, most examples we have in mind fall within the scope of the first statement, which is therefore the most relevant for our purposes. For reasons of clarity and convenience, we thus presented a complete proof of the first theorem and now only sketch the proof of the second.
\medskip

As in the proof of Theorem~\ref{thm:main}, it suffices to prove $(viii) \Rightarrow (i)$, which we do by the same reasoning.
Proceeding by contraposition, assume that $T$ is not compact. Hence, $\exists x_n,y_n \in M$ such that $(\gamma_n):=(T(m_{x_n,y_n}))_{n \in \mathbb{N}}$ does not have convergent subsequence.
Since $\mathcal{FS}_k(N)$ has the Schur property, $(\gamma_n)$ does not admit any weakly Cauchy subsequence. By Rosenthal's $\ell^1$ theorem, we may assume that $(\gamma_n)$ is equivalent to the canonical basis of $\ell^1$. The desired implication will follow from the next proposition.

\begin{prop}{}
Let, for all $n \in \mathbb{N}$, $\gamma_n=\sum\limits_{i=1}^{k} a_i^n\delta(x_i^n) \in \mathcal{FS}_k(M)$. We assume that $(\gamma_n)$ is equivalent to the canonical basis of $\ell^1$. Then, there exists a subsequence $(\gamma_{n_k})_k$ of $(\gamma_n)$ which is complemented in $\mathcal{F}(M)$. 
\end{prop}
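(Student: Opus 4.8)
The plan is to follow, step by step, the proof of Proposition~\ref{prop:cl1}, upgrading its four-case analysis to a finite bookkeeping over the $k$ coordinate sequences. As a first reduction, passing to the completion (recall that $\mathcal{F}(M)$ and $\mathcal{F}(\overline M)$ are isometrically isomorphic via the canonical map, which sends $\mathcal{FS}_k$-elements to $\mathcal{FS}_k$-elements) we may assume $M$ complete, and by \cite{AA} (whose isomorphism carries $\mathcal{FS}_k(M)$ onto $\mathcal{FS}_k(B(M))$ for every $k$) we may assume $M$ bounded, so $\mathrm{rad}(M)<\infty$. After rescaling we may also assume
\[
(\ast)\quad \exists C>0,\ \forall (c_i)\ \text{finitely supported},\qquad \sum_i|c_i|\ \le\ \Bigl\|\sum_i c_i\gamma_i\Bigr\|\ \le\ C\sum_i|c_i|,
\]
so that in particular $(\gamma_n)$ is $2$-separated; and, after merging coinciding support points and a pigeonhole extraction, that every $\gamma_n$ is of the form $\sum_{i=1}^{k}a_i^n\delta(x_i^n)$ with the $x_i^n$ pairwise distinct for fixed $n$.

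I would then carry out the combinatorial step. Applying a Ramsey-type extraction successively to each of the sequences $(x_i^n)_n$, $i=1,\dots,k$, one obtains a subsequence along which each $(x_i^n)_n$ is either $\eta_i$-separated for some $\eta_i>0$ or Cauchy, hence (as $M$ is complete) convergent to some $\ell_i\in M$. Let $L=\{\ell_1,\dots,\ell_p\}$ list the distinct limits that occur and let $S$ be the set of separated coordinates. A short argument shows that each separated coordinate stays at a positive distance from every point of $L$ (otherwise one could extract a convergent subsequence, contradicting its separation), and that $\min_{j\ne j'}d(\ell_j,\ell_{j'})>0$. One more extraction, exactly as in cases (2)--(4) of the proof of Proposition~\ref{prop:cl1}, makes the points of $\mathrm{supp}(\gamma_n)$ that cluster at a given $\ell_j$ approach it geometrically fast in $n$, so that the associated ``shells'' around $\ell_j$ isolate the index $n$; combining these with small pairwise disjoint balls around the separated points $x_i^n$, $i\in S$, one gets pairwise disjoint open sets $U_n\supset\mathrm{supp}(\gamma_n)$ which moreover avoid $L$ and avoid $\mathrm{supp}(\gamma_m)$ for all $m\ne n$.

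Finally, I would produce the biorthogonal functionals as before. Since $(\gamma_n)$ is $2$-separated and $F:=\mathrm{span}\{\delta(\ell_j):1\le j\le p\}$ is finite-dimensional, Lemma~\ref{lemma:sev} gives, after a last extraction, $d(\gamma_n,F)>\tfrac12$ for all $n$, whence by Hahn--Banach there is $g_n\in\mathrm{Lip}_0(M)$ with $\|g_n\|_L\le 2$, $g_n(\ell_j)=0$ for all $j$, and $\langle g_n,\gamma_n\rangle=1$. Setting $f_n=g_n$ on $\mathrm{supp}(\gamma_n)\cup L$ and $f_n=0$ on $M\setminus U_n$, one checks that $f_n$ is Lipschitz on its domain with $\|f_n\|_L$ bounded uniformly in $n$: on a ball around a separated $x_i^n$ one uses $|g_n(x_i^n)|\le\|g_n\|_L\,\mathrm{rad}(M)$ and tapers over distance $\gtrsim\eta_i$, while on a shell around $\ell_j$ one uses $g_n(\ell_j)=0$ together with the geometric decay of the previous step --- these are precisely the estimates from cases (1)--(4). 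Extending each $f_n$ to $\widetilde{f_n}\in\mathrm{Lip}_0(M)$ by McShane--Whitney (Lemma~\ref{lemma:McShane}), the sequence $(\widetilde{f_n})$ is bounded, has pairwise disjoint supports (the $U_n$ being disjoint), and is biorthogonal to $(\gamma_n)$: $\langle\widetilde{f_n},\gamma_n\rangle=\langle g_n,\gamma_n\rangle=1$, while for $m\ne n$ the support of $\gamma_m$ lies in $M\setminus U_n$, so $\langle\widetilde{f_n},\gamma_m\rangle=0$. Lemma~\ref{lemma:Projection} then yields a bounded projection of $\mathcal{F}(M)$ onto $\overline{\mathrm{span}}\{\gamma_n\}$, completing the proof.

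The hard part will be the bookkeeping of the combinatorial step: compared with $k=2$, one must control, simultaneously and along a single chain of finitely many extractions, several clusters of coordinates collapsing to (possibly distinct) accumulation points together with several genuinely separated coordinates, arranging all the neighbourhoods $U_n$ to be mutually disjoint and to miss both $L$ and every other support at once. Once the right order of extractions is fixed, every individual estimate reduces verbatim to one already appearing in the proof of Proposition~\ref{prop:cl1}.
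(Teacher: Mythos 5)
Your sketch is correct and follows essentially the same route as the paper's own proof: split the $k$ coordinate sequences into separated ones and convergent ones after finitely many extractions, arrange geometrically shrinking pairwise disjoint neighbourhoods of the supports, use Lemma~\ref{lemma:sev} with $F$ the span of the Dirac masses at the limit points to get uniformly bounded Hahn--Banach functionals vanishing on $F$, truncate, extend by McShane--Whitney, and conclude with Lemma~\ref{lemma:Projection}. The level of detail you defer (the order of the extractions and the verbatim reuse of the case (1)--(4) estimates) matches what the paper itself leaves as a sketch, so there is no gap to report.
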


\begin{proof}{}
In the whole proof, we will always use the notation $(\gamma_n)$ and $(x_i^n)$ even for potential extractions.\\
As in Proposition~\ref{prop:cl1}, we may assume that $M$ is bounded without loss of generality.
Up to a re-indexation, and up to passing to a subsequence, we may assume that:
\begin{itemize}[leftmargin=*]
    \item there are $j$ sequences $(x_1^n),...,(x_j^n)$ ($j \leq k$) which are $\varepsilon$-separated for some $\varepsilon>0$, 
    \item and $k-j$ sequences which converge in $M$: $x_{j+1}^n \rightarrow x_{j+1},..., x_k^n \rightarrow x_k$. 
\end{itemize}
Note that one (and only one) of the integers $j$ or $k-j$ may be 0. In that case, some of the sets we are going to consider will be empty but this will cause no difficulty.\\
Up to an extraction, we can assume that: 
$$\forall i \in  \{1,...,j\}, \forall n  \neq m, x_i^n \notin \bigcup_{\underset{p \neq i}{p=1}}^{j} B\left(x_p^m,\frac{\varepsilon}{4}\right),$$
$$\forall i \in \{j+1,...,k\}, x_i^n \in B\left(x_i,\frac{\varepsilon}{4}\right).$$
Moreover, we may assume that $\displaystyle\bigcup_{\underset{p \neq i}{p=1}}^{j} B\left(x_p^m,\frac{\varepsilon}{4}\right)$ and $B\left(x_i,\frac{\varepsilon}{4}\right)$ for $i \in \{j+1,...,k \}$, as well as the balls $B\left(x_i,\frac{\varepsilon}{4}\right)$ and $B\left(x_{i'},\frac{\varepsilon}{4}\right)$ for $i,i' \in \{j+1,...,k \}$, $x_i\neq x_{i'}$, are pairwise disjoint.\\
Let $B_n=\{x_1^n,x_2^n,...,x_j^n\}$. We may assume that the $B_n$'s are pairwise disjoint. 
We also know that $(\gamma_n)$ is equivalent to the canonical basis of $\ell^1$, so up to multiplying by a positive constant we may assume that $(\gamma_n)$ is 2-separated: 
$$\forall i \neq j, \|\gamma_i-\gamma_j\| \geq 2.$$

Let $C_n=\{x_{j+1}^n,x_{j+2}^n,...,x_k^n\}$. 
Up to a further extraction, we may assume that: 
$$  d(x_i,x_i^{n+1}) \leq \frac{1}{4}\min_{p \in \{ j+1,...,k\}}d(x_p,x_p^n), \text{ for all } n \in \mathbb{N}, \text{ for all } i \in \{j+1,...,k\}.$$
For all $n \in \mathbb{N}$, for all $i \in \{ j+1,...,k\}$, let us consider the sets: 
$$A_i^n=\left\{z \in M ~  | ~ d(z,x_i^n) \leq \frac{1}{4}d(x_i,x_i^n)\right\}.$$
Up to an extraction, we may assume that: $\forall n \in \mathbb{N}, \forall i \in \{j+1,...,k\}, A_i^n \subset B\left(x_i,\frac{\varepsilon}{4}\right)$. It is readily seen that the $A_i^n$'s are all pairwise disjoints (see the proof of the cases (3) and (4) in Proposition~\ref{prop:cl1}).
Let us consider $F=\mathrm{span}\{\delta(x_i), i \in \{ j+1,...,k\}\}$. Applying Lemma \ref{lemma:sev} with $\delta=\frac{1}{2}$, one has, up to passing to a subsequence:
$$ d(\gamma_n,\mathrm{span}\{\delta(x_i)\})>\frac{1}{2} \text{ for all } n \in \mathbb{N}, \text{ for all } i \in \{ j+1,...,k\}.$$
By the Hahn-Banach theorem, 
$$\forall n \in \mathbb{N}, \exists h_n \in \text{Lip}_0(M), \|h_n\|_L=2, \begin{cases}
    \restriction{h_n}{F} \equiv 0\\
    \langle h_n,\gamma_n\rangle=1.
\end{cases}$$
Finally, let, for all $n \in \mathbb{N}$, 
$$f_n=\begin{cases}
    h_n \text{ on } B_n \cup C_n \cup \{x_{j+1},...,x_k\}\\
    0 \text{ on } M \setminus \left(\displaystyle\bigcup_{i=1}^{j} B \left(x_i^n,\frac{\varepsilon}{4}\right) \cup \bigcup_{i=j+1}^{k} A_i^n\right).
\end{cases}$$
We must check that for $n \in \mathbb{N}$, $f_n$ is Lipschitz on the set on which it is defined. Let $x,z$ in this reunion. Several cases must be considered:

\begin{enumerate}
    \item[-] If $x,z$ are both in $B_n \cup C_n \cup \{x_{j+1},...,x_k\}$, then the Lipschitz inequality holds since $h_n$ is Lipschitz.
    \item[-] If $x,z$ are both in $M \setminus \left(\displaystyle\bigcup_{i=1}^{j} B \left(x_i^n,\frac{\varepsilon}{4}\right) \cup \bigcup_{i=j+1}^{k} A_i^n\right)$, then $f_n(x)=f_n(z)=0$.
    \item[-] If $x \in B_n \cup C_n \cup \{x_{j+1},...,x_k\}$ and $z \in M \setminus \left(\displaystyle\bigcup_{i=1}^{j} B \left(x_i^n,\frac{\varepsilon}{4}\right) \cup \bigcup_{i=j+1}^{k} A_i^n\right)$, 
    \begin{enumerate}
        \item[·] If $x=x_i^{n}$ for $i \in \{ 1,...,j\}$, we pursue as in the case (1) of the proof of Proposition \ref{prop:complemented}:
        $$|f_n(x)-f_n(z)|=|h_n(x_i^n)-h_n(0)| \leq \|h_n\|_Ld(x_i^n,0) \leq \frac{8}{\varepsilon}\mathrm{rad}(M)d(x,z).$$
        \item[·] If $x=x_i^n$, $i \in \{j+1,...,k \}$, we pursue as in the case (4) of the proof of Proposition \ref{prop:complemented}:
        $$|f_n(x)-f_n(z)|=|h_n(x_i^n)-h_n(x_i)| \leq 2d(x_i^n,x_i) \leq 8d(z,x_i^n).$$
        \item[·] If $x=x_i$ for $i \in \{ j+1,...,k\}$, we simply have:
        $$|f_n(x)-f_n(z)|=|h_n(x_i)-h_n(x_i)|=0.$$
    \end{enumerate}
\end{enumerate}
Hence, $f_n$ is Lipschitz, and using the McShane-Whitney extension theorem (Lemma \ref{lemma:McShane}), we can extend it to obtain $\widetilde{f_n} \in \text{Lip}_0(M)$. The sequence $(\widetilde{f_n})_n$ is bounded in $\mathrm{Lip}_0(M)$ and have disjoint supports. The sequence $(\gamma_n)$ is bounded in $\mathcal{F}(M)$ (because equivalent to the canonical basis of $\ell^1$) and biorthogonal to $(\widetilde{f_n})$
We use the lemma \ref{lemma:Projection} and we obtain the desired projection.
\end{proof}

There are many operators satisfying the hypotheses of Theorem~\ref{thm:secondmain}. As already observed, it covers the classical Lipschitz operators $\widehat{f}$ as well as the weighted Lipschitz operators $\omega \widehat{f}$, which were already encompassed by Theorem~\ref{thm:main}. Note that the multiplication operators
\[
\delta(x) \in \mathcal{F}(M) \mapsto \omega(x)\,\delta(x),
\]
for $\omega \colon M \to \mathbb{R}$, are special cases of weighted Lipschitz operators $\omega \widehat{f}$ with $f = \mathrm{Id}_M$. In particular, the operators appearing in Kalton's decomposition (see \cite{Kalton}, Proposition~4.3) satisfy the hypotheses of Theorem~\ref{thm:secondmain}, as does the operator constructed in \cite{AA} in the proof of their Theorem~A.

\begin{remark}
Let us briefly point out that Theorem~\ref{thm:secondmain} is genuinely stronger than Theorem~\ref{thm:main}: there exist operators satisfying the hypotheses of Theorem~\ref{thm:secondmain} but not those of Theorem~\ref{thm:main}. For instance, it is natural to consider operators $T:\mathcal{F}(M)\to\mathcal{F}(N)$ defined by
\[
T(\delta(x)) = \sum_{i=1}^r \alpha_i(x)\,\delta(z_i(x)), 
\]
where the maps $\alpha_i : M \to \mathbb{R}$ and $z_i : M \to N$ are Lipschitz. Such an operator sends each Dirac mass at $x$ to a finite ``splitting'' of this mass among the points $z_1(x),\dots,z_r(x)$ in a Lipschitz-controlled way, and need not be of the form $\omega\,\widehat{f}$. From the viewpoint of optimal transport, which is closely related to Lipschitz-free spaces (the $1$-Wasserstein space over $M$ embeds isometrically into $\mathcal{F}(M)$; see e.g. \cite[Chapter~3.3]{Weaver2}), this reflects the fact that an optimal transport plan typically splits the mass at a given source point between several target points. Thus, the additional generality of Theorem~\ref{thm:secondmain} may be relevant from this perspective, even though we shall not pursue this direction here.

Another interesting example could be the following one. Let $(M,0_M)$ a pointed metric space and $f : M \longrightarrow M$ Lipschitz. Let us set the pointed metric space $(M,f(0_M))$. The fundamental factorization theorem ensures the existence of a bounded operator between the associate Lipschitz-free spaces $\widehat{f} : \mathcal{F}_{0_M}(M) \longrightarrow \mathcal{F}_{f(0_m)}(M)$ such that the following diagram commutes:
$$\begin{tikzcd}
(M,0_M) \arrow[d,"\delta_M"'] \arrow[r,"f"] & (M,f(0_M)) \arrow[d,"\delta_M"]\\
\mathcal{F}_{0_M}(M) \arrow[r,"\hat{f}"'] & \mathcal{F}_{f(0_M)}(M)
\end{tikzcd}$$
It is known that the construction of the Lipschitz-free spaces does not depend on the fixed origin, in the sense that there exists an isometric isomorphism $I$ between $\mathcal{F}_{f(0_M)}(M)$ and $\mathcal{F}_{0_M}(M)$ given by $I : \delta(x) \longmapsto \delta(x)-\delta(f(0_M))$. Let us now define $T_{f,0_M}:=I \circ \widehat{f} : \mathcal{F}_{0_M}(M) \longrightarrow \mathcal{F}_{0_M}(M)$, and show that $T_{f,0_M}$ verifies the hypothesis of Theorem \ref{thm:secondmain}. Let $a,b \in \mathbb{C}$ and $x,y \in M$. We have:
\begin{align*}
    I \circ \widehat{f}(a\delta(x)+b\delta(y))&=aI \circ \widehat{f}(\delta(x))+bI \circ \widehat{f}(\delta(y))\\
    &=aI(\delta(f(x)))+bI(\delta(f(y)))\\
    &=a(\delta(f(x))-\delta(f(0_M))+b(\delta(f(y))-\delta(f(0_M)) \in \mathcal{FS}_3(M)
\end{align*}
Such an operator $T_{f,0_M}$ is interesting for its linear dynamical properties.
\end{remark}

\begin{remark} \label{remarkeq}
There are probably many other operator properties that could be incorporated into Theorem~\ref{thm:secondmain}. In fact, by the structure of our proof, any operator property which is weaker than compactness but stronger than not fixing a (complemented) copy of $\ell_1$ will, under the assumptions of Theorem~\ref{thm:secondmain}, turn out to be equivalent to compactness. For the sake of completeness, let us mention two additional properties.
\begin{itemize}[leftmargin=*]
    \item Weakly precompact operators.
    A bounded operator $T\colon X \to Y$ is said to be \emph{weakly precompact} if every bounded sequence $(x_n)_n \subset X$ has a subsequence $(x_{n_k})_k$ such that $(T(x_{n_k}))_k$ is weakly Cauchy; equivalently, $T(B_X)$ is weakly precompact. By Rosenthal's $\ell_1$-theorem, a bounded set is weakly precompact if and only if it contains no sequence equivalent to the unit vector basis of $\ell_1$. It is then clear that
    \[
        T \text{ compact } \implies T \text{ weakly precompact } \implies T \text{ does not fix a copy of } \ell_1.
    \]
    Thus, weakly precompact operators can also be treated within the framework of Theorem~\ref{thm:secondmain}.
    \medskip

    \item Operators sending the unit ball to $V^*$-sets.
    Recall that a series $\sum\limits_{n=1}^\infty x_n$ in $X$ is said to be \emph{weakly unconditionally Cauchy} (abbreviated \emph{WUC}) if
    \[
        \sum_{n=1}^\infty \bigl|\langle x^*,x_n\rangle\bigr| < \infty 
        \quad\text{for every } x^* \in X^*.
    \]
    A subset $W \subset X$ is called a \emph{V*-set} if for every WUC series $\sum x_n^*$ in $X^*$ one has
    \[
        \lim_{n\to\infty} \sup_{x \in W} \bigl|\langle x, x_n^* \rangle\bigr| = 0.
    \]
    Every relatively weakly compact set is a V*-set, while the converse fails in general. It is well known (see, e.g., \cite[Proposition~1.1]{Bombal}) that $W \subset X$ is a V*-set if and only if it does not contain a sequence $(x_n)$ equivalent to the unit vector basis of $\ell_1$ whose closed linear span is complemented in $X$. Thus, if $T\colon \mathcal{F}(M) \to \mathcal{F}(N)$ satisfies the assumptions of Theorem~\ref{thm:secondmain}, then
    \[
        T\colon \mathcal{F}(M) \to \mathcal{F}(N) \text{ is compact}
        \quad\Longleftrightarrow\quad
        T(B_{\mathcal{F}(M)}) \text{ is a V*-set}.
    \]
\end{itemize}
\end{remark}

\begin{remark}
A Banach space $X$ is said to have \emph{Pe\l{}czy\'nski's property~(V*)} if every V*-set in $X$ is relatively weakly compact. Similarly, a bounded operator $T\colon X \to Y$ is called a \emph{V*-operator} if it maps V*-sets to relatively weakly compact sets. Note that $X$ has property~(V*) if and only if the identity map $\mathrm{Id}_X \colon X \to X$ is a V*-operator. Since a compact operator sends bounded sets to relatively compact sets, and since V*-sets are bounded, it follows that every compact operator is a V*-operator.

However, in contrast with the situation described in Remark~\ref{remarkeq} above, it is not true that, for every $T\colon \mathcal{F}(M) \to \mathcal{F}(N)$ satisfying the assumptions of Theorem~\ref{thm:secondmain}, if $T$ is a V*-operator then $T$ is compact. Indeed, a simple counterexample is given by the identity map $\mathrm{Id}_{\mathbb{R}}\colon \mathbb{R} \to \mathbb{R}$. In this case, $\mathcal{F}(\mathbb{R})$ is linearly isometric to $L^1(\mathbb{R})$, a space which is well known to have property~(V*). Therefore
\[
\widehat{\mathrm{Id}_{\mathbb{R}}}
= \mathrm{Id}_{\mathcal{F}(\mathbb{R})}\colon \mathcal{F}(\mathbb{R}) \to \mathcal{F}(\mathbb{R})
\]
is a V*-operator, but it is obviously not compact. It is noteworthy that this operator is not Dunford--Pettis either, that is, it does not send relatively weakly compact sets to relatively compact sets. The latter class of Lipschitz operators has been recently characterized in \cite{p1u}.
\end{remark}

In view of the above remark, as well as the recent works \cite{ACP21,p1u} which provide metric characterizations of compact and Dunford--Pettis Lipschitz operators, the following question naturally arises.

\begin{question}
Is there a metric characterization, that is, one in terms of metric properties of $f: M \to N$, of those Lipschitz operators $\widehat{f}: \mathcal{F}(M) \to \mathcal{F}(N)$ which are V*-operators?
\end{question}

\subsection*{Acknowledgments} I would like to express my sincere thanks to Colin Petitjean for many helpful discussions and for his careful reading of this work.

\end{document}